\definecolor{mygray}{gray}{0.85}
\renewcommand{\leq}{\leqslant}
\renewcommand{\geq}{\geqslant}
\def\subsection{\@startsection{subsection}{3}%
  \z@{.5\linespacing\@plus.7\linespacing}{.3\linespacing}%
  {\bfseries\centering}}
\def\subsubsection{\@startsection{subsubsection}{3}%
  \z@{.5\linespacing\@plus.7\linespacing}{.3\linespacing}%
  {\centering}}
\def\myfnt{\ifx\protect\@typeset@protect\expandafter\footnote\else\expandafter\@gobble\fi}
\newtheorem{theorem}{Theorem}
\newtheorem{definition}[theorem]{Definition}
\newtheorem{lemma}[theorem]{Lemma}
\newtheorem{observation}[theorem]{Observation}
\newtheorem{fact}[theorem]{Fact}
\newtheorem{conclusion}[theorem]{Conclusion}
\newtheorem{notation}[theorem]{Notation}
\newtheorem{convention}[theorem]{Convention}
\newtheorem*{nproblem}{Problem}
\newtheorem*{nconjecture}{Conjecture}
\newtheorem*{maintheorem}{Main Theorem}
\newtheorem*{nfact}{Fact}
\newcounter{claimcounter}
\numberwithin{claimcounter}{theorem}
\begin{document}

\begin{abstract} In \cite{tzaban}, given a metrizable profinite group $G$, a cardinal invariant of the continuum $\mathfrak{fm}(G)$ was introduced, and a positive solution to the Haar Measure Problem for $G$ was given under the assumption that $\mathrm{non}(\mathcal{N}) \leq \mathfrak{fm}(G)$. We prove here that it is consistent with ZFC that there is a metrizable profinite group $G_*$ such that $\mathrm{non}(\mathcal{N}) > \mathfrak{fm}(G_*)$, thus demonstrating that the strategy of \cite{tzaban} does not suffice for a general solution to the Haar Measure Problem.
\end{abstract}

\title{On a Cardinal Invariant Related to the Haar Measure Problem}
\thanks{Partially supported by European Research Council grant 338821. No. 1148 on Shelah's publication list. The present paper was written while the first author was a post-doc research fellow at the Einstein Institute of Mathematics of the Hebrew University of Jerusalem, supported by European Research Council grant 338821.}

\author{Gianluca Paolini}
\address{Department of Mathematics ``Giuseppe Peano'', University of Torino, Via Carlo Alberto 10, 10123, Italy.}
\email{gianluca.paolini@unito.it}

\author{Saharon Shelah}
\address{Einstein Institute of Mathematics,  The Hebrew University of Jerusalem, Israel \and Department of Mathematics,  Rutgers University, U.S.A.}

\date{\today}
\maketitle

\section{Introduction}

	It is well-known that every compact group admits a unique translation-invariant probability measure, its {\em Haar measure}. A long-standing\footnote{The problem dates back at least to 1963, when in \cite[Section 16.13(d)]{hewitt} the problem was posed and settled in the positive in the abelian case.} open problem asks:
	
	\begin{nproblem}[Haar Measure Problem] Does every infinite compact group have a non-Haar-measurable subgroup?
\end{nproblem}

	In \cite{morris} the problem was settled in the positive under the assumption that the compact group is not an infinite {\em metrizable profinite group}. Furtheremore, in \cite{mislove} it was proved that it is consistent with ZFC that every infinite compact group has a non-Haar-measurable subgroup. Very recently, progress  has been made toward a solution to the Haar measure problem for infinite metrizable profinite groups. In fact, in \cite{tzaban} the authors introduced a certain cardinal invariant of the continuum $\mathfrak{fm}(G)$, depending on a metrizable profinite group $G$, \mbox{and proved (see Sec. \ref{sec_pre} for definitions):}
	
	\begin{nfact}[\cite{tzaban}] Let $G$ be an infinite metrizable profinite group. If $\mathrm{non}(\mathcal{N}) \leq \mathfrak{fm}(G)$, then $G$ has a non-Haar measurable subgroup.
\end{nfact}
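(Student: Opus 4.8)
The plan is to build the non-Haar-measurable subgroup $H \leq G$ by transfinite recursion, using the definition of $\mathfrak{fm}(G)$ from Section~\ref{sec_pre} to keep the partial subgroups ``tame'' and the hypothesis $\mathrm{non}(\mathcal N) \leq \mathfrak{fm}(G)$ to guarantee there is enough room to run the recursion to the end.

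First I would record the two reductions furnished by the Steinhaus--Weil theorem (a Haar-measurable subgroup of a compact group is either Haar-null or clopen of finite index), together with the fact that $G$, being an infinite metrizable profinite group, is a perfect Polish space, so its nonempty open sets — in particular its clopen subgroups and its closed sets of positive Haar measure — all have size $\mathfrak c$. Consequently: (a) any subgroup of $G$ of size $< \mathfrak c$ that is not Haar-null is non-measurable (it is automatically proper and not clopen); (b) any proper subgroup of $G$ of full outer Haar measure is non-measurable (if measurable it would be conull, hence clopen of finite index, hence all of $G$, a contradiction). If $\mathrm{non}(\mathcal N) < \mathfrak c$, alternative (a) is reached trivially: take a non-Haar-null set $A$ with $|A| = \mathrm{non}(\mathcal N)$ (which exists by the definition of $\mathrm{non}(\mathcal N)$) and put $H = \langle A\rangle$, which still has size $\mathrm{non}(\mathcal N) < \mathfrak c$ and contains $A$, hence is non-null. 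So the crux is the case $\mathrm{non}(\mathcal N) = \mathfrak c$, in which the hypothesis forces $\mathfrak{fm}(G) = \mathfrak c$ and one must aim for alternative (b).

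For that case: fix a strictly decreasing chain $G = K_0 \supseteq K_1 \supseteq \cdots$ of open normal subgroups with $\bigcap_n K_n = \{e\}$ (so $G = \varprojlim_n G/K_n$ and $\mu(gK_n) = [G:K_n]^{-1}$), fix a point $z \in G \setminus \{e\}$, and enumerate the closed positive-measure sets as $\langle F_\alpha : \alpha < \mathfrak c\rangle$. Build an increasing chain $\langle H_\alpha : \alpha \leq \mathfrak c\rangle$ of subgroups with $z \notin H_\alpha$, with $H_{\alpha+1} \cap F_\alpha \neq \emptyset$, and with every $H_\alpha$ ``tame'' in the sense of the definition of $\mathfrak{fm}(G)$; at limits take unions. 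The successor step is: given a tame $H_\alpha$ (of size $< \mathfrak{fm}(G) = \mathfrak c$) with $z \notin H_\alpha$, choose $x_\alpha \in F_\alpha$ so that $\langle H_\alpha, x_\alpha\rangle$ is again tame and still omits $z$, and set $H_{\alpha+1} = \langle H_\alpha, x_\alpha\rangle$. This requires knowing that the set of ``bad'' $x$ — those for which $z \in \langle H_\alpha, x\rangle$, or for which tameness is lost — is Haar-null, so that it cannot contain the positive-measure set $F_\alpha$. At the end, $H = \bigcup_{\alpha < \mathfrak c} H_\alpha$ is a proper subgroup meeting every closed positive-measure set, hence has full outer measure, hence is non-measurable by (b).

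The whole difficulty is this nullity of the bad set, which is exactly what the definition of $\mathfrak{fm}(G)$ is tailored to control: when $G$ is abelian, $\langle H_\alpha, x\rangle = H_\alpha + \mathbb{Z}x$ and the bad set is easily seen to be null (this is, in effect, the classical abelian solution alluded to in the footnote), but in a non-abelian $G$ the relation ``$z \in \langle H_\alpha, x\rangle$'' can hold for a non-null set of $x$ (large centralizers, equations $[h,x] = z$ with $h$ of finite-index centralizer, etc.), and ``tameness'' is precisely the inductive invariant that, carried along and combined with $\mathrm{non}(\mathcal N) \leq \mathfrak{fm}(G)$, rules this out at every one of the $\mathfrak c$ stages. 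I expect the genuinely hard part to be isolating the right notion of tameness and proving that it is preserved under adjoining a single suitably chosen generator; this, together with the bookkeeping of the enumeration, is presumably the core of the argument of \cite{tzaban}.
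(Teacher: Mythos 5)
This Fact is not proved in the present paper at all: it is quoted from \cite{tzaban}, so there is no internal argument to compare yours with, and your outline has to stand on its own. It does not. The two reductions you set up are fine (the Steinhaus--Weil dichotomy, the cardinality argument when $\mathrm{non}(\mathcal{N})<\mathfrak{c}$ --- which, note, never uses the hypothesis), but everything after that is a declaration of intent: you never say what ``tameness'' is, never show the bad set is null, and never actually engage the hypothesis $\mathrm{non}(\mathcal{N})\leq\mathfrak{fm}(G)$ or the specific shape of the Fubini--Markov sets of Definition \ref{alg_set_def}(\ref{fub_mark_def}). That deferred step is not a detail; it is the theorem.

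Moreover, the scaffolding you chose makes the deferred step unattainable as stated. At stage $\alpha$ the bad set $\{x : z\in\langle H_\alpha,x\rangle\}$ is a union of $|H_\alpha|\cdot\aleph_0$ elementary algebraic sets $\{x : w(x,\bar h)=z\}$; these need not be null (so they need not be Fubini--Markov sets at all, since every Fubini--Markov set is null, by definition in case (a) and by Fubini in case (b)), and even if each piece were null, once $|H_\alpha|\geq\mathrm{cov}(\mathcal{N})$ the union need not be null --- and $\mathrm{cov}(\mathcal{N})<\mathrm{non}(\mathcal{N})=\mathfrak{c}$ is consistent (e.g.\ the Cohen model), so in exactly your ``crux'' case the recursion of length $\mathfrak{c}$ through all closed positive-measure sets already breaks at stage $\omega_1$. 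Note also that $\mathfrak{fm}(G)\geq\mathrm{non}(\mathcal{N})$ only says that fewer than $\mathrm{non}(\mathcal{N})$ Fubini--Markov sets have union of measure $<1$; that does not prevent such a union from containing your fixed positive-measure set $F_\alpha$, so even repackaging the bad set as a small family of Fubini--Markov sets would not by itself produce the needed $x\in F_\alpha$. The invariant is designed to be used differently (one can see the dual use in Lemma \ref{cruc_lemma1} of this paper): the relevant objects are two-variable elementary algebraic null sets and their positive-fiber projections $\{g:\mu(A_g)>0\}$, against which one chooses the elements to be excluded (or the generators to be skipped) generically, with the recursion tied to a non-null set of size $\mathrm{non}(\mathcal{N})$ rather than to an enumeration of all closed positive-measure sets of length $\mathfrak{c}$. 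As written, your proposal reproduces the standard outer frame (transfinite construction plus the measurable-subgroup dichotomy) and leaves out precisely the mechanism that makes the statement of \cite{tzaban} true.
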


	Also in \cite{tzaban}, the authors conjectured:
	
	\begin{nconjecture}[\cite{tzaban}] Let $G$ be an infinite metrizable profinite group. Then:
	$$\mathrm{non}(\mathcal{N}) \leq \mathfrak{fm}(G).$$
\end{nconjecture}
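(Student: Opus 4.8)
The plan is to prove the inequality $\mathrm{non}(\mathcal{N}) \leq \mathfrak{fm}(G)$ by manufacturing a witness for $\mathfrak{fm}(G)$ out of a non-null set of reals of least possible size. Since $G$ is an infinite metrizable profinite group, its Haar measure $\mu$ is a non-atomic Borel probability measure on a perfect Polish space (indeed $G$ is homeomorphic to the Cantor space), so by the isomorphism theorem for standard probability spaces there is a measure isomorphism $\varphi \colon ([0,1], \mathrm{Leb}) \to (G, \mu)$, defined off a null set. First I would fix a non-null $A \subseteq [0,1]$ with $|A| = \mathrm{non}(\mathcal{N})$ and transport it to a non-null set $\varphi[A] \subseteq G$ of the same cardinality. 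The aim is then to show that \emph{any} non-null subset of $G$ already fulfils the combinatorial requirement defining $\mathfrak{fm}(G)$, which would immediately give $\mathfrak{fm}(G) \geq |A| = \mathrm{non}(\mathcal{N})$ and, via the Fact, a ZFC solution to the Haar Measure Problem for metrizable profinite groups.

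Writing $G = \varprojlim_n G_n$ with open normal subgroups $G = N_0 \supseteq N_1 \supseteq \cdots$ satisfying $\bigcap_n N_n = \{1\}$, finite quotients $G_n = G/N_n$, and projections $\pi_n \colon G \to G_n$, the second step is to translate the defining property of $\mathfrak{fm}(G)$ into a statement about how a family $F \subseteq G$ distributes across the $G_n$ under the maps $\pi_n$. Because the Haar measure is exactly the normalized counting measure threaded through the inverse system, a non-null $F$ cannot be confined, at any level $n$, to a union of cosets of total measure zero. The hope is to leverage this to argue that a non-null $F$ must realize the required pattern of cosets cofinally in $n$, and hence can play the role of the family of size $\mathfrak{fm}(G)$ demanded by the construction underlying the Fact.

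The hard part, and in truth the crux of the entire matter, is precisely this transfer: one must deduce the algebraic spreading condition measured by $\mathfrak{fm}(G)$ from mere non-nullity of $F$. This is where I expect the argument to break, and I suspect the obstruction is genuine rather than an artifact of the proof. The invariant $\mathfrak{fm}(G)$ is sensitive to the arithmetic of the indices $[N_n : N_{n+1}]$ and to the requirement that patterns be realized \emph{simultaneously} across levels, whereas $\mathrm{non}(\mathcal{N})$ sees only the measure. For a suitably chosen inverse system these two notions decouple: rapid growth of the quotients permits a small non-null set that is nonetheless combinatorially thin, so that non-nullity no longer forces the fine spreading that $\mathfrak{fm}$ records. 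The natural remedy — restricting to the positive-measure cosets at each level and diagonalizing — founders exactly here, since an adversarial arrangement of the quotient sizes makes the diagonalization require strictly more than $\mathrm{non}(\mathcal{N})$-many choices.

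In short, the measure-transfer strategy carries the setup cleanly up to the single step of extracting algebraic richness from analytic largeness, and that step is the wall. I would regard failing to surmount it not as a defect of the approach but as evidence that the conjecture is not a theorem of ZFC: a group $G_*$ whose inverse system is engineered to maximize the decoupling described above should admit, in a carefully chosen model, a small combinatorial witness while $\mathrm{non}(\mathcal{N})$ remains large, forcing $\mathfrak{fm}(G_*) < \mathrm{non}(\mathcal{N})$ and thereby refuting the inequality.
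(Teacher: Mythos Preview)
The statement you were asked to prove is not a theorem in the paper; it is the conjecture from \cite{tzaban} that the paper \emph{refutes} (Main Theorem). There is therefore no ``paper's proof'' of the inequality to compare against, and your own write-up is, appropriately, not a proof either: you sketch a measure-transfer strategy, correctly identify the step at which it fails (extracting the algebraic covering property that $\mathfrak{fm}$ measures from mere non-nullity), and conclude that the inequality should be consistently false. That diagnosis is right. I would note, though, that the positive part of your sketch is muddled about direction: exhibiting that a non-null set of size $\mathrm{non}(\mathcal{N})$ ``fulfils the combinatorial requirement defining $\mathfrak{fm}(G)$'' would, if it meant anything, give an \emph{upper} bound $\mathfrak{fm}(G)\leq\mathrm{non}(\mathcal{N})$, not the lower bound you want. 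The honest content of your proposal is the final paragraph.

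Your anticipated refutation---engineer an inverse system so that a small family suffices for the $\mathfrak{fm}$-covering while $\mathrm{non}(\mathcal{N})$ is forced large---is exactly the paper's strategy, but the paper is far more concrete. It builds finite groups $G^*_i$ (Definitions~\ref{CR} and~\ref{the_finite_groups}, Lemma~\ref{crucial_lemma}) so that for any $k_i$ elements of $G^*_i$ there are parameters $c_1,c_2$ making the word $w(x,y,\bar z)=[[[x,z_1],z_2],y]$ vanish on those elements while the full solution set in $G^*_i\times G^*_i$ has density $\leq 1/m_i$. With $G_*=\prod_i G^*_i$, this converts a slalom-covering family $\mathcal F\subseteq\prod_i[G^*_i]^{k_i}$ into $|\mathcal F|$ Fubini--Markov sets covering $G_*$ (Lemma~\ref{cruc_lemma1}), so $\mathfrak{fm}(G_*)$ is bounded by a localization-type invariant for the pair $(f_1,g_1)$. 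A dual slalom condition for a second pair $(f_2,g_2)$ forces $\mathrm{non}(\mathcal N)>\aleph_1$ (Lemma~\ref{cruc_lemma2}). The Kellner--Shelah decisive-creature forcing \cite{forcing} then produces a model in which the first invariant is $\aleph_1$ while the second exceeds $\aleph_1$, yielding $\mathfrak{fm}(G_*)\leq\aleph_1<\mathrm{non}(\mathcal N)$. So your instinct about ``decoupling'' is correct, but the work lies in the explicit finite-group construction that ties $\mathfrak{fm}(G_*)$ to a specific slalom invariant one knows how to separate from $\mathrm{non}(\mathcal N)$.
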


	In this work we refute the conjecture above, thus demonstrating that the strategy of \cite{tzaban} does not suffice for a general solution to the Haar Measure Problem.
	
	\begin{maintheorem}\label{main_th} It is consistent with ZFC that there exists an infinite metrizable profinite group $G_*$ such that:
	$$\mathrm{non}(\mathcal{N}) > \mathfrak{fm}(G_*).$$
\end{maintheorem}

	Notice that in the aforementioned work from \cite{mislove}, the exibithed models of ZFC witnessing that the Haar Measure Problem has consistently a positive answer do {\em not} satisfy $\mathrm{CH}$, while, despite the failure of the main conjecture in \cite{tzaban} proved in this paper, the work of \cite{tzaban} shows the remarkable result that in {\em all} the models of ZFC satisfying $\mathrm{CH}$ the Haar Measure Problem has a positive answer.

\section{Preliminaries}\label{sec_pre}

	\begin{convention} 
	\begin{enumerate}[(1)]
	\item We denote by $\omega$ the set of natural numbers.
	\item Given $n < \omega$, we identify $n$ with the set $\{ 0, ..., n-1 \} = [0, n)$.
	\item Given a set $X$ we denote by $\mathcal{P}(X)$ the set of subsets of $X$.
	\item Given a set $X$ and $n < \omega$, we denote by $[X]^n$ the set of subsets of $X$ of power~$n$.
\end{enumerate}
\end{convention}

	\begin{definition}\label{def_metr_pro} A {\em metrizable profinite group} $G$ is a profinite group of the form $\varprojlim^{\bar{\varphi}}_{i < \omega} G_i$, for $\bar{\varphi} = (\varphi_i : i < \omega)$ and $\varphi_i \in \mathrm{Hom}(G_{i+1}, G_i)$, i.e. $G$ is an inverse $\bar{\varphi}$-limit of an $(\omega, <)$-inverse system of finite groups. When the homorphisms $\varphi_i$ are clear from the context, we might forget to mention $\bar{\varphi}$ and simply write $\varprojlim_{i < \omega} G_i$.
\end{definition}

	\begin{notation}\label{notation_measure} Given a metrizable profinite group we denote by $\mu$ its Haar measure, i.e. the unique translation-invariant probability measure defined on $G$.
\end{notation}

	\begin{notation} Let $1 < n < \omega$, $A \subseteq G^n$ and $g \in G$. We let:
	$$A_g = \{ (h_1, ..., h_{n-1}) \in G^{n-1} :  (h_1, ..., h_{n-1}, g)\in A \}. $$
\end{notation}

	\begin{definition}\label{alg_set_def} Let $G$ be a metrizable profinite group. 
	\begin{enumerate} [(1)]
	\item We say that $X \subseteq G^{n}$ is an {\em elementary algebraic set} if there is a group word $w(\bar{x}, \bar{z})$, with $|\bar{x}| = n$, and a sequence of parameters  $\bar{c} \in G^{|\bar{z}|}$ such that: 
	$$X = \{ \bar{a} \in G^{|\bar{x}|} : G \models w(\bar{a}, \bar{c}) = e \}.$$
	\item We say that $X \subseteq G^{n}$ is an {\em elementary algebraic null set} if $X$ is an elementary algebraic set which is null with respect to $\mu$ (cf. Notation \ref{notation_measure}).
	\item\label{fub_mark_def} We say that $X \subseteq G$ is {\em Fubini-Markov} if either of the following happens:
	\begin{enumerate}[(a)]
	\item $X$ is an elementary algebraic null set;
	\item there is $1 < n < \omega$ and an elementary algebraic null set $A \subseteq G^n$ such that: $$X = \{ g \in G : \mu(A_g) > 0 \}.$$
	\end{enumerate}
\end{enumerate} 
\end{definition}

	\begin{definition}\label{fm_card} Let $G$ be a metrizable profinite group. The cardinal invariant $\mathfrak{fm}(G)$ is the smallest size of a collection of Fubini-Markov sets whose union has measure~$1$.
\end{definition}



	\begin{fact}\label{description_measure} Let $G = \varprojlim^{\bar{\varphi}}_{i < \omega} G_i$ be a metrizable profinite group and let $\pi_i$ be the canonical projection of $G$ onto $G_i$, for $i < \omega$. Let $U \subseteq G$ be a closed set of the form:
	$$U = \bigcap_{i < \omega} \pi_i^{-1}(B_i),$$
with $B_i \subseteq G_i$ and $\varphi_{i}(B_{i+1}) = B_i$, for $i < \omega$. Then:
	$$\mu(U) = \lim_{i \rightarrow \infty} \frac{|B_i|}{|G_i|}.$$
\end{fact}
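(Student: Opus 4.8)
My plan is to reduce the statement to the standard description of Haar measure on a profinite group as the inverse limit of the normalized counting measures, and then to invoke downward continuity of the measure. As is customary, I will work with a presentation of $G$ in which the bonding maps $\varphi_i$, hence also the projections $\pi_i$, are surjective; this costs nothing, and in fact the hypothesis $\varphi_i(B_{i+1}) = B_i$ for all $i$ already forces $B_i = \varphi_{i,j}(B_j) \subseteq \varphi_{i,j}(G_j)$ for all $j > i$ (writing $\varphi_{i,j}$ for the composite bonding map $G_j \to G_i$), so each $B_i$ lies in $\pi_i(G) = \bigcap_{j \geq i}\varphi_{i,j}(G_j)$ anyway.

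First I would pin down $\mu$ on a single cylinder. Fix $i < \omega$. The set $N_i := \ker \pi_i = \pi_i^{-1}(\{e\})$ is clopen (by continuity of $\pi_i$ and discreteness of the finite group $G_i$) and of index $|G_i|$ in $G$, and for $g \in G_i$ the fibre $\pi_i^{-1}(\{g\})$ is a coset of $N_i$; the $|G_i|$ cosets partition $G$ and, by translation invariance of $\mu$, all have the same measure, so $\mu(\pi_i^{-1}(\{g\})) = 1/|G_i|$. Summing over a subset $S \subseteq G_i$ gives $\mu(\pi_i^{-1}(S)) = |S|/|G_i|$; in particular $\mu(U_i) = |B_i|/|G_i|$, where $U_i := \pi_i^{-1}(B_i)$.

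Next I would observe that the $U_i$ form a decreasing chain. Since $\pi_i = \varphi_i \circ \pi_{i+1}$ and $B_{i+1} \subseteq \varphi_i^{-1}(B_i)$ (immediate from $\varphi_i(B_{i+1}) = B_i$), we get $U_{i+1} = \pi_{i+1}^{-1}(B_{i+1}) \subseteq \pi_{i+1}^{-1}(\varphi_i^{-1}(B_i)) = (\varphi_i \circ \pi_{i+1})^{-1}(B_i) = \pi_i^{-1}(B_i) = U_i$. Hence $U = \bigcap_{i<\omega} U_i$ is a countable decreasing intersection of measurable sets of finite measure, and downward continuity of $\mu$ yields $\mu(U) = \lim_{i \to \infty} \mu(U_i) = \lim_{i\to\infty} |B_i|/|G_i|$; the limit exists since, by the inclusions $U_{i+1} \subseteq U_i$ and the formula of the previous step, the sequence $(|B_i|/|G_i|)_{i<\omega}$ is non-increasing and bounded below by $0$.

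I do not expect a genuine obstacle: the argument is the routine ``Haar measure $=$ inverse limit of normalized counting measures'' computation combined with continuity of a probability measure from above. The only point that deserves care is the normalization $\mu(\pi_i^{-1}(S)) = |S|/|G_i|$, which is literally correct only once one knows $[G : \ker\pi_i] = |G_i|$, i.e.\ that $\pi_i$ is onto; this is why I would state the reduction to surjective bonding maps explicitly at the outset rather than leave it implicit.
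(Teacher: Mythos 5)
Your proof is correct and takes essentially the same route as the paper, whose ``proof'' merely cites Fried--Jarden for the two steps you carry out by hand: that $\mu(\pi_i^{-1}(B_i)) = |B_i|/|G_i|$, and that the measure of the decreasing intersection $\bigcap_i \pi_i^{-1}(B_i)$ is the limit of these values. Your explicit reduction to surjective projections is a sensible precaution (without surjectivity the normalization $1/|G_i|$, and indeed the stated formula, can fail, and the mere inclusion $B_i \subseteq \pi_i(G)$ would not repair it), and this convention is implicit in the paper's citation.
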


	\begin{proof} Notice that:
	$$\begin{array}{rcl}\label{equation}
	\mu(U)& = & \mu(\bigcap_{i < \omega} \pi_i^{-1}(B_i)) \\
		  & = & lim_{i \rightarrow \infty} \mu(\pi^{-1}(B_i)) \;\;\;\;\; (\text{by \cite[Chapter 18, item 2f, pg. 363]{field_arithmetic}}) \\
		  & = & lim_{i \rightarrow \infty} \frac{|B_i|}{|G_i|} \;\;\;\;\;\;\;\;\;\;\;\;\;\;\;\;\, (\text{by \cite[Chapter 18, Example 18.2.3]{field_arithmetic}}).
\end{array}$$
\end{proof}

\begin{definition}\label{non_null} We denote by $\mathcal{N}$ the ideal of null sets in the Cantor space $2^\omega$, and by $non(\mathcal{N})$ the minimal cardinality of a non-null subset of $2^\omega$.
\end{definition}


\section{Building Appropriate Finite Groups}

	\begin{notation}\label{not_prod} Let $G$ be a group and $\bar{g} = (g_i : i < n)$, for $n < \omega$, a finite sequence of elements of $G$. Given $I \subseteq n$ we let  $g_I = \prod_{i \in I} g_i \in G$ (if $I = \emptyset$, then $g_I = e$).
\end{notation}

	\begin{definition}\label{CR} For $2 \leq 4m \leq n < \omega$ such that $\frac{2}{2^m} + \frac{1}{n^2} < \frac{1}{m}$, let $\mathbf{CR}_{(n, m)}$ be the class of triples $(G, \bar{y}, \bar{z})$ such that:
	\begin{enumerate}[(a)]
	\item $G$ is a finite group;
	\item\label{the_y} $\bar{y} = (y_i : i < n)$ is a sequence of pairwise commuting elements of $G$ each of order $2$ and such that $\langle \bar{y} \rangle_G$ is a subgroup of order $2^n$;
	\item $\bar{z} = (z_I : I \in [n]^m)$ and $z_I \in G$;
	\item\label{item_zcommutesy} for every $I \subseteq n$ and $J \in [n]^m$, $[y_I, z_J] = e$ iff $I \in \{ J, \emptyset \}$ (cf. Notation~\ref{not_prod});
	\item if $s \in G - \{ e \}$, then $|\{ t \in G : [s, t] = e \}| < |G|/n^2$.
	\end{enumerate}
\end{definition}

	\begin{lemma}\label{pre_finite_groups} For $n, m < \omega$ as in Definition \ref{CR}, $\mathbf{CR}_{(n, m)} \neq \emptyset$ (cf. Definition \ref{CR}).
\end{lemma}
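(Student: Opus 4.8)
The plan is to build $G$ as a carefully chosen extension (or section) of an extraspecial-type $2$-group $\langle \bar y\rangle \cong (\mathbb{Z}/2)^n$ by a large nonabelian group in which the centralizer condition (e) can be arranged. Concretely, I would first produce the "commutator pairing" encoded in (d): we want elements $z_I$, indexed by $I\in[n]^m$, whose commutators with the monomials $y_I$ detect exactly the index set $I$. This is naturally realized inside a Heisenberg-style central extension: take a nondegenerate alternating form whose restriction to the subspace spanned by $\bar y$ together with the chosen $z_I$'s has the property that $\langle y_I,z_J\rangle$ is trivial iff $I\in\{J,\varnothing\}$. The first step, then, is the linear-algebra step: find vectors $v_I\in(\mathbb{Z}/2)^N$ (for suitable $N$ polynomial in $n$) together with a symplectic form $\beta$ so that $\beta(\sum_{i\in I}e_i,\,v_J)=1$ iff $I\neq\varnothing$ and $I\neq J$ — equivalently, $v_J$ pairs to $1$ with every nonzero $y_I$ except $y_J$ itself. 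This is a solvable system of linear constraints over $\mathbb{F}_2$ once $N$ is large enough; I would verify solvability by a counting/dimension argument, and this is where the numerical hypothesis $\frac{2}{2^m}+\frac1{n^2}<\frac1m$ and $4m\le n$ will be consumed.

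Next I would upgrade the symplectic datum to an actual finite group $G$ via the standard extraspecial construction: $G$ is a central extension $1\to \mathbb{Z}/2\to G\to V\to 1$ where $V=\mathbb{F}_2^N$ carries the form $\beta$, so that $[a,b]$ in $G$ computes $(-1)^{\beta(\bar a,\bar b)}$. Then $\langle \bar y\rangle_G$ has order $2^n$ (it is an isotropic-up-to-the-center flat, so the $y_i$ commute, have order $2$, and span a subgroup of the right size — giving (a),(b)), the $z_I$'s are the lifts of the $v_I$'s, and (d) holds by construction since $[y_I,z_J]=e$ exactly when $\beta(y_I,v_J)=0$, i.e. when $I\in\{J,\varnothing\}$. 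Finally, condition (e) — that every nontrivial element has a small centralizer — is the genuinely delicate point: in a bare extraspecial group the centralizer of a noncentral element is a hyperplane, far too big. To fix this I expect one must take a semidirect product with a group of automorphisms acting "generically" (e.g. a large-order element or a field-automorphism action, as in the construction of groups with small conjugacy-class intersections), or pass to a quotient/extension that shrinks centralizers while preserving (a)–(d); one then counts that for $s\neq e$ the set $\{t:[s,t]=e\}$ has size $<|G|/n^2$, again using the inequality in Definition \ref{CR} to make the bookkeeping close.

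The main obstacle, as indicated, is reconciling (e) with (b) and (d): the $y_i$ must genuinely commute with one another (forcing an abelian, hence large-centralizer, subgroup of order $2^n$), yet \emph{every} nonidentity element — including the $y_i$ and the $y_I$ — must have centralizer of size below $|G|/n^2$. So $|G|$ must be taken much larger than $2^n/n^{-2}$, i.e. $|G|\gg n^2 2^n$, and the extra "room" has to be filled by a part of the group on which the action is free enough that even elements of the abelian flat $\langle\bar y\rangle$ acquire small centralizers. I would handle this by letting $G$ act on itself with a transitive-enough twist — concretely, realize $G$ as a subgroup of a wreath-type or matrix group over a field $\mathbb{F}_{2^k}$ with $k\sim n^2$, where commuting is governed by a trace form that is highly nondegenerate, so centralizers are automatically of relative size $\le 2^{-k}<1/n^2$. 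Checking that this ambient construction still contains the prescribed commuting tuple $\bar y$ and the detecting tuple $\bar z$ satisfying (d) is the step I would spend the most care on; everything else (orders, the size of $\langle\bar y\rangle$, nullity is not needed here) is routine verification against the defining inequalities.
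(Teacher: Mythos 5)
There is a genuine gap, and it is in your very first step. In the extraspecial/Heisenberg construction the commutator map is forced to be bilinear with values in the central $\mathbb{Z}/2$: for fixed $z_J$, the map $y \mapsto [y, z_J]$ is then a homomorphism from $\langle \bar{y} \rangle \cong \mathbb{F}_2^{\,n}$ to $\mathbb{Z}/2$, so its kernel is a subgroup of index at most $2$, hence of order at least $2^{n-1}$. But Definition \ref{CR}(d) requires the set of $I \subseteq n$ with $[y_I, z_J] = e$ to be exactly $\{\emptyset, J\}$, i.e.\ the kernel must have order $2$ --- impossible once $n \geq 2$ (and here $n \geq 4m$). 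Equivalently, the ``solvable linear system'' you propose, $\beta(\sum_{i \in I} e_i, v_J) = 1$ iff $I \notin \{\emptyset, J\}$, has no solution for any $N$: choosing distinct nonempty $I_1, I_2 \neq J$ whose symmetric difference is nonempty and different from $J$ yields $1 + 1 = 0 \neq 1$ by linearity, so no dimension count can rescue it. (Also, the inequality $\frac{2}{2^m} + \frac{1}{n^2} < \frac{1}{m}$ is not what is consumed here; in the paper it enters only later, in the measure estimates of Lemma \ref{crucial_lemma}.) The paper avoids exactly this obstruction by \emph{not} making the commutators central: it starts from $G_0 = \bigoplus_{i<2n}\mathbb{Z}_2 y_i$ of rank $2n$ rather than $n$, picks for each $I$ an automorphism $\pi_I$ of $G_0$ fixing $y_I$ but moving every $y_J$ with $J \subseteq n$, $J \notin \{\emptyset, I\}$, and adjoins $z_I$ acting by conjugation as $\pi_I$; then $[y_J, z_I] = y_J^{-1}\pi_I(y_J)$ takes values in all of $G_0$, and although the fixed subgroup of $\pi_I$ is a subgroup (possibly large), the extra $n$ dimensions allow it to meet $\{y_J : J \subseteq n\}$ in precisely $\{e, y_I\}$.

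Your treatment of condition (e) is, as you acknowledge, only a statement of intent (a ``generic'' semidirect product, or matrix groups over $\mathbb{F}_{2^k}$ with a trace form), with the compatibility with (a)--(d) left entirely open; that is where the real work would lie in your route. The paper's fix is much simpler and worth noting: having built $G_1$ as above, it takes $G = \mathrm{Sym}(G_1)$ with $G_1$ embedded via the regular representation, so (a)--(d) are inherited verbatim, while in a symmetric group of degree $\mathbf{n} = |G_1| > n^2$ every non-identity element has centralizer of relative size less than $1/n^2$, which gives (e) with no further tuning. So the proposal, as it stands, neither produces the pairing required by (d) nor supplies an argument for (e).
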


\begin{proof} Let $G_0$ be the Abelian group $\bigoplus \{ \mathbb{Z}_2 y_i : i < 2n \}$ (where $\mathbb{Z}_2 y_i$ is the group with two elements with generator $y_i$), and, for $I \subseteq n$, let $y_I = \sum \{ y_i : i \in I \}$ (i.e. we are using Notation \ref{not_prod} in additive notation). For $I \subseteq n$, let $\pi_I \in Aut(G_0)$ be such that for every $J \subseteq n$ with $J \notin \{ \emptyset, I \}$ we have that:
$$\pi_I(y_J) \neq y_J \;\; \text{ and } \;\; \pi_I(y_I) = y_I.$$
[Why must such $\pi_I$'s exist? Let $(y^I_\ell : \ell < 2n)$ be a basis of $G_0$ such that $y^I_0 = y_I$, if $I \neq \emptyset$, and any $x \in G_0 - \{ e \}$ otherwise (it is well known that every $x \in G_0 - \{ e\}$ can be extended to a basis of $G_0$). Let $\pi'_I$ be such that, $\pi'_I(y^I_\ell) = y_{n+\ell}$ , for $\ell \in (0, n)$, and $\pi'_I(y^I_0) = y^I_0$. Then any extension of $\pi'_I$ to a $\pi_I \in Aut(G_0)$ is as wanted.]
\newline Let $G_1$ be the group generated by $G_0 \cup \{  z_I: I \in [n]^m \}$ freely except for:
	\begin{enumerate}[(i)]
	\item the equations of $G_0$;
	\item if $I \subseteq n$ and $x \in G_0$, then $z_I^{-1} x z_I = \pi_I(x)$.
\end{enumerate}
Let $G$ be $Sym(G_1)$ (the group of permutations of the set $G_1$), interpreting $G_1$ as a subgroup of $G$, and let $\mathbf{n} = |G_1|$. Then clearly $\mathbf{n} > n^2$ (which will be used at the end of the proof). Now, we claim that $(G, \bar{y}, \bar{z}) \in \mathbf{CR}_{(n, m)}$, for $\bar{y} = (y_i : i < n)$ and $\bar{z} = (z_I : I \in [n]^m)$. Clearly, clauses (a)-(d) of Definition \ref{CR} hold.
Finally, concerning condition (e), notice that if $s \in G - \{ e \}$, then:
$$|\{ t \in G : [s, t] = e \}|  \leq \frac{\mathbf{n}!}{(\mathbf{n}-1)!} = \mathbf{n} \leq (\mathbf{n}-1)! =  |G|/\mathbf{n} < |G|/n^2.$$
\end{proof}

	\begin{definition}\label{the_finite_groups} Let $\mathbf{CR}$ be the set of tuples $\mathbf{p}$ such that:
	$$
\begin{array}{rcl}\label{equation}
\mathbf{p}& = & (k_{\mathbf{p}}, m_{\mathbf{p}}, n_{{\mathbf{p}}}, (G_{({\mathbf{p}}, 1)}, \bar{y}^1, \bar{z}^1), G_{({\mathbf{p}}, 2)}) \\
		  & = & (k, m, n, (G_1, \bar{y}^1, \bar{z}^1), G_2),
\end{array}$$
and:
	\begin{enumerate}[$(*)_{0}$]
	\item \begin{enumerate}[(a)]
	\item $0 < k < m < n < \omega$;
	\item $2 \leq 4m \leq n$;
	\item $2^{k}m = n$ and $k << n$;
	\item\label{fractions} $\frac{2}{2^m} + \frac{1}{n^2} < \frac{1}{m}$.
\end{enumerate}
\end{enumerate}
	\begin{enumerate}[$(*)_1$]
	\item $(G_1, \bar{y}^1, \bar{z}^1) \in \mathbf{CR}_{(n, m)}$ (cf. Definition \ref{CR});
\end{enumerate}
	\begin{enumerate}[$(*)_2$]
	\item
	\begin{enumerate}[(a)]
	\item we let $\mathfrak{c}_{\mathbf{p}} = \mathfrak{c} : n \times n \rightarrow G_1$ be such that for $i_0, i_1 < n$ we have:
	\begin{enumerate}[($\alpha$)]
	\item $\mathfrak{c}(i_0, i_1) = e$, if $i_0 \neq i_1$;
\end{enumerate}
	\begin{enumerate}[($\beta$)]
	\item $\mathfrak{c}(i_0, i_1) := y^1_i$, if $i_0 = i_1 = i$;
	\end{enumerate}
	\item $G_2$ is the group generated freely by $G_1 \cup \{ y^{\ell}_i = y_{(\ell, i)} : \ell \in \{ 2, 3 \}, i < n \}$ except for:
	\begin{enumerate}[($\alpha$)]
	\item the equations of $G_1$;
\end{enumerate}
	\begin{enumerate}[($\beta$)]
	\item $y^{\ell}_i$ has order $2$, for every $\ell \in \{ 2, 3 \}$ and $i < n$;
	\end{enumerate}
	\begin{enumerate}[($\gamma$)]
	\item $y^\ell_{i}$ and $y^\ell_{j}$ commute, for every $\ell \in \{2, 3 \}$ and $i, j < n$;
	\end{enumerate}
	\begin{enumerate}[($\delta$)]
	\item for every $\ell \in \{ 2, 3\}$, $i < n$ and $g \in G_1$, $y^\ell_i$ commutes with $g$;
	\end{enumerate}
	\begin{enumerate}[($\epsilon$)]
	\item $[y^2_{i}, y^3_{j}] = \mathfrak{c}(i, j)$, for every $i,  j < n$.
	\end{enumerate}
	\end{enumerate}
	\end{enumerate}
\end{definition}

	\begin{notation}\label{notation_n2} For uniformity of notation, given the context of Definition \ref{the_finite_groups}, and in particular $k$, $m$ and $n$ as there, we will let $n = n_2 = n_3$.
\end{notation}

\begin{lemma}\label{claim_a8} Let $\mathbf{p} \in \mathbf{CR}$ (cf. Definition \ref{the_finite_groups}). Then:
	\begin{enumerate}[(1)]
	\item $G_2 = G_{(\mathbf{p}, 2)}$ is finite, $G_1$ is a normal subgroup of $G_2$ and $G_2/G_1$ is Abelian.
	\item\label{item_words} for every $x \in G_2$, there are unique $\mathcal{U}_{\ell} = \mathcal{U}({\ell}) = \mathcal{U}_{\ell}(x) = \mathcal{U}({\ell}, x) \subseteq [0, n_{\ell})$ (cf. Notation \ref{notation_n2}), for $\ell \in \{ 2, 3 \}$, and $y_{(1, x)} \in G_1$, such that:
	$$x = y_{(3, \mathcal{U}(3))}
		  y_{(2, \mathcal{U}(2))}
		  y_{(1, x)},$$
where, for $\ell \in \{ 2, 3 \}$, we let:
	$${y}_{(\ell, \mathcal{U}({\ell}))} = \prod_{i \in \mathcal{U}({\ell})} y^\ell_i.$$
	\end{enumerate}
\end{lemma}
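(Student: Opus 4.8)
The plan is to read the coarse structure of $G_2$ off the quotient $G_2/G_1$, and to obtain the normal form by a collecting argument whose uniqueness is then pinned down against an explicitly constructed model. Clause (1) is immediate from the quotient: each defining generator $y^\ell_i$ ($\ell\in\{2,3\}$, $i<n$) even centralises $G_1$ by relation $(\delta)$, so in particular it normalises $G_1$, whence $G_1\trianglelefteq G_2$. In $G_2/G_1$ the $2n$ images $\bar y^\ell_i$ generate, each is an involution by $(\beta)$, same-layer images commute by $(\gamma)$, and $\bar y^2_i$ commutes with $\bar y^3_j$ since $[y^2_i,y^3_j]=\mathfrak c(i,j)\in G_1$ by $(\epsilon)$. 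Thus $G_2/G_1$ is an elementary Abelian $2$-group of rank $\le 2n$, in particular Abelian and of order $\le 2^{2n}$; since $G_1$ is finite (Definition~\ref{CR}(a)), $G_2$ is finite with $|G_2|\le 2^{2n}|G_1|$.

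For the existence half of clause (2), let $S=\{\,y_{(3,\tau)}\,y_{(2,\sigma)}\,g\ :\ \tau,\sigma\subseteq n,\ g\in G_1\,\}$. Since $e\in S$ and $G_2$ is generated by $G_1\cup\{y^\ell_i\}$, it suffices to show $S$ is closed under right multiplication by each generator. Right multiplication by $g_0\in G_1$ only changes the $G_1$-coordinate. Right multiplication by $y^2_i$: by $(\delta)$ we may slide $y^2_i$ to the left of $g$, and $y_{(2,\sigma)}y^2_i=y_{(2,\sigma\triangle\{i\})}$ because the $y^2$'s commute and square to $e$. Right multiplication by $y^3_i$ is the only case with a correction: sliding $y^3_i$ leftwards past $y_{(2,\sigma)}$ produces, by $(\epsilon)$, a factor $\mathfrak c(i,i)=y^1_i$ exactly when $i\in\sigma$; this factor lies in $G_1$ and, by $(\delta)$ and $(\gamma)$, commutes with everything to its right, so it merges into the $G_1$-coordinate, leaving an element $y_{(3,\tau\triangle\{i\})}\,y_{(2,\sigma)}\,g'$ with $g'\in G_1$. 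As $(y^\ell_i)^{-1}=y^\ell_i$, this covers all generators, so $S=G_2$; this also reconfirms $|G_2|\le 2^{2n}|G_1|$.

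For the uniqueness half, first the coordinates $\tau=\mathcal U(3)$ and $\sigma=\mathcal U(2)$ are determined by $x$: there is a homomorphism $\varphi\colon G_2\to(\mathbb Z/2\mathbb Z)^{2n}$ killing $G_1$ with $y^2_i\mapsto e_i$ and $y^3_i\mapsto e_{n+i}$ --- all defining relations hold, the target being elementary Abelian of exponent $2$ --- and $\varphi\bigl(y_{(3,\tau)}y_{(2,\sigma)}g\bigr)=\sum_{i\in\sigma}e_i+\sum_{j\in\tau}e_{n+j}$, so $(\tau,\sigma)$ is recovered from $\varphi(x)$. Then $y_{(1,x)}=y_{(2,\sigma)}^{-1}y_{(3,\tau)}^{-1}x$ is the only possible value, and it remains to see that distinct elements of $G_1$ give distinct products. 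For this I would realise $G_2$ concretely: the computation above forces the multiplication on $H:=\mathcal P(n)\times\mathcal P(n)\times G_1$ to be
\[
(\tau_1,\sigma_1,g_1)\cdot(\tau_2,\sigma_2,g_2)\ =\ \bigl(\tau_1\triangle\tau_2,\ \sigma_1\triangle\sigma_2,\ y^1_{\sigma_1\cap\tau_2}\,g_1g_2\bigr),\qquad y^1_T:=\prod_{i\in T}y^1_i\in G_1 .
\]
I would check that $H$ is a group with unit $(\emptyset,\emptyset,e)$; that $g_0\mapsto(\emptyset,\emptyset,g_0)$, $y^2_i\mapsto(\emptyset,\{i\},e)$, $y^3_i\mapsto(\{i\},\emptyset,e)$ respect the presentation of $G_2$, hence extend to an epimorphism $G_2\twoheadrightarrow H$; and that, since $|H|=2^{2n}|G_1|\ge|G_2|$, this epimorphism is an isomorphism carrying $y_{(3,\tau)}y_{(2,\sigma)}g$ to $(\tau,\sigma,g)$. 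Uniqueness of the triple in $H$ then yields uniqueness of the normal form, and the isomorphism $G_2\cong H$ re-proves clause (1).

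The step I expect to be the main obstacle is the verification that the twisted product on $H$ is associative, i.e. that $(\sigma,\tau)\mapsto y^1_{\sigma\cap\tau}$ is a $2$-cocycle for the $\triangle$-multiplication on $\mathcal P(n)\times\mathcal P(n)$. After using distributivity of $\cap$ over $\triangle$ together with $y^1_Ay^1_B=y^1_{A\triangle B}$, this reduces to the fact that the commutator values $\mathfrak c(i,i)=y^1_i$ commute with all of $G_1$ inside $G_2$ --- precisely what relation $(\delta)$ buys, via the elementary observation that a commutator of two elements, each centralising a given subgroup, again centralises that subgroup. Everything else (closure of $S$, well-definedness of $\varphi$ and of the map to $H$, and the order count) is routine.
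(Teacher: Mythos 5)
Your first two paragraphs are sound: the quotient argument for clause (1) (modulo knowing that $G_1$ actually embeds), and the collecting argument showing that every element of $G_2$ has \emph{some} representation $y_{(3,\tau)}y_{(2,\sigma)}g$ with $g$ in the image of $G_1$, together with the homomorphism onto $(\mathbb{Z}/2\mathbb{Z})^{2n}$ that pins down $\mathcal{U}(2)$ and $\mathcal{U}(3)$. (The paper gives no argument at all here -- its proof is ``Clear'' -- so the real question is whether your uniqueness step closes.) It does not. The twisted product you put on $H=\mathcal{P}(n)\times\mathcal{P}(n)\times G_1$ is not associative: taking $A=(\emptyset,\emptyset,g)$, $B=(\emptyset,\{i\},e)$, $C=(\{i\},\emptyset,e)$, your formula gives $(AB)C=(\{i\},\{i\},y^1_i\,g)$ but $A(BC)=(\{i\},\{i\},g\,y^1_i)$, and $y^1_i$ is \emph{not} central in $G_1$: by Definition~\ref{CR}(\ref{item_zcommutesy}) we have $[y^1_{\{i\}},z^1_J]\neq e$ for every $J\in[n]^m$ (since $m\geq 2$), and Definition~\ref{CR}(e) even forces $G_1$ to have trivial centre. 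Your proposed fix -- that the cocycle values ``commute with all of $G_1$ inside $G_2$'' via relation $(\delta)$ -- is circular: associativity of $H$ has to be verified inside $G_1$ itself, before any map between $H$ and $G_2$ exists; you cannot borrow a commutation law from the very group whose structure the model $H$ is supposed to establish. So $H$ is not a group, the von Dyck map $G_2\twoheadrightarrow H$ does not exist, and the uniqueness half (in particular that distinct elements of $G_1$ give distinct normal forms, i.e.\ injectivity of the canonical map $G_1\to G_2$) is not proved.

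Moreover, the obstruction you ran into is not a bookkeeping artifact, and it is worth stating the other direction of your own observation: in any group satisfying the relations of Definition~\ref{the_finite_groups}$(*)_2(b)$, both $y^2_i$ and $y^3_i$ centralise the image of $G_1$ by $(\delta)$, hence so does their commutator, which by $(\epsilon)$ is the image of $y^1_i$; since $y^1_i$ is not central in $G_1$, the canonical map $G_1\to G_2$ determined by the presentation as literally written cannot be injective (its kernel contains the nontrivial elements $[y^1_i,z^1_J]$). So no argument of the type you attempt can deliver clause (1) and the uniqueness of $y_{(1,x)}\in G_1$ for that literal presentation; the definition has to be read (or repaired) so that the commutation required in $(\delta)$ does not force the values $\mathfrak{c}(i,i)$ to centralise all of $G_1$. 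A correct write-up of this lemma must confront that point explicitly rather than treat it as routine -- which is also why the paper's one-word proof is not a safe guide here.
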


	\begin{proof} Clear.
\end{proof}

	\begin{lemma}\label{partitions} Let $\mathbf{p} \in \mathbf{CR}$ (cf. Definition \ref{the_finite_groups}), $G_2 = G_{(\mathfrak{p}, 2)}$, and $k = k_{\mathbf{p}}$. If $x_0, ..., x_{k-1} \in G_2$, then for some $I_* \subseteq [0, n_2)$ (cf. Notation \ref{notation_n2}) we have:
	\begin{enumerate}[(a)]
	\item $|I_*| = n_2/2^{k}$ (recall that $n_2/2^{k} = n/2^{k} = 2^km/2^k= m$);
	\item if $\ell < k$, then $\mathcal{U}_2(x_\ell) \cap I_* \in \{I_*, \emptyset \}$ (cf. Lemma \ref{claim_a8}(\ref{item_words})).
	\end{enumerate}
\end{lemma}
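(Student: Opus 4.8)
The plan is to reduce the statement to an elementary pigeonhole argument on the atoms of a finite Boolean algebra. By Lemma~\ref{claim_a8}(\ref{item_words}), each $x_\ell$ determines a well-defined set $\mathcal{U}_2(x_\ell) \subseteq [0, n_2) = [0, n)$, so I would begin by fixing the $k$ subsets $A_\ell := \mathcal{U}_2(x_\ell)$ of $[0,n)$ for $\ell < k$.

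Next, I would consider the Boolean subalgebra $\mathcal{B}$ of $\mathcal{P}([0,n))$ generated by $\{ A_\ell : \ell < k \}$. Its atoms are precisely the nonempty sets
$$A(S) \;=\; \bigcap_{\ell \in S} A_\ell \;\cap\; \bigcap_{\ell \in k \setminus S} \big([0,n) \setminus A_\ell\big), \qquad S \subseteq k,$$
and the family $\{ A(S) : S \subseteq k,\ A(S) \neq \emptyset \}$ partitions $[0,n)$ into at most $2^k$ blocks. Since $|[0,n)| = n = 2^k m$ by $(*)_0$(c), the pigeonhole principle yields some $S_* \subseteq k$ with $|A(S_*)| \geq m$. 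I would then fix any $I_* \subseteq A(S_*)$ with $|I_*| = m = n/2^k = n_2/2^k$, which immediately gives clause~(a).

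For clause~(b): if $\ell \in S_*$ then $A(S_*) \subseteq A_\ell$, so $I_* \subseteq A_\ell$ and $A_\ell \cap I_* = I_*$; if instead $\ell \in k \setminus S_*$ then $A(S_*) \cap A_\ell = \emptyset$, so $I_* \cap A_\ell = \emptyset$. In either case $\mathcal{U}_2(x_\ell) \cap I_* = A_\ell \cap I_* \in \{ I_*, \emptyset \}$, as required, and the lemma follows.

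I do not expect a real obstacle here; the only points needing care are the standard fact that a Boolean algebra generated by $k$ sets has at most $2^k$ atoms, and the observation that the arithmetic identity $2^k m = n$ (built into Definition~\ref{the_finite_groups}) is exactly what makes the pigeonhole bound yield a block of size at least the target value $m$. The other conditions of Definition~\ref{the_finite_groups}, and the structure of $G_2$ beyond the decomposition of Lemma~\ref{claim_a8}, play no role in this particular lemma.
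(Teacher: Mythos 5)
Your proposal is correct and is essentially the same argument as the paper's: the atoms $A(S)$ of the Boolean algebra generated by the $\mathcal{U}_2(x_\ell)$ are exactly the paper's partition classes $I_\eta$ for $\eta \in 2^k$, and both proofs then apply pigeonhole using $n = 2^k m$ and shrink the large class to a set $I_*$ of size exactly $m$. No gap; the phrasing via Boolean-algebra atoms versus indexing by $\eta \in 2^k$ is only cosmetic.
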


	\begin{proof} For $\eta \in 2^k$, let:
	$$I_{\eta} = \{ i < n_2 : \text{ if } \ell < k, \text{ then } i \in \mathcal{U}_2(x_\ell) \Leftrightarrow \eta(\ell) = 1 \}.$$
So $(I_{\eta} : \eta \in 2^k)$ is a partition of $[0, n_2)$ into $2^k$ parts, hence for some $\eta \in 2^k$ we have that $|I_{\eta}| \geq n_2/2^k$ (recall that $2^{k} \mid n_2$ and $k << n_2$). Now, let $I_* \subseteq I_{\eta}$ be such that it satisfies clause (a) of the statement of the lemma. Then $I_*$ is as wanted.
\end{proof}

\begin{lemma}\label{crucial_lemma} Let $\mathbf{p} \in \mathbf{CR}$ (cf. Definition \ref{the_finite_groups}). If $x_\ell \in G_2 = G_{(\mathbf{p}, 2)}$, for $\ell < k = k_{\mathbf{p}}$, then for some $I_* \subseteq n$ and $c, c_* \in G_2$ we have:
	\begin{enumerate}[(a)]
	\item $c = y^3_{I_*}$ and $c_* = z^1_{I_*}$;
	\item $G_2 \models [[x_\ell, c], c_*] = e$;
	\item $|I_*| = n_2/2^k$;
	\item $(B_I : I \subseteq I_*)$ is a partition of $G_2$ into sets of equal size such that:
	$$G \models [[x, c], c_*] = e \text{ iff } x \in B_{\emptyset} \cup B_{I_*},$$
where, for $I \subseteq I_*$, we let:
	$$B_I = \{ a \in G_2 : [a, c] = y^1_I \};$$
	\item 
	$$|\{ (x, y) \in G_2 \times G_2 : G_2 \models [[[x, c], c_*], y] = e \}| \leq \frac{|G_2 \times G_2|}{m}.$$
	\end{enumerate}
\end{lemma}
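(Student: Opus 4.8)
The idea is to read everything off the normal form of Lemma~\ref{claim_a8}(\ref{item_words}) after making the combinatorial choice of $I_*$ supplied by Lemma~\ref{partitions}. So: given $x_\ell\in G_2$ for $\ell<k$, apply Lemma~\ref{partitions} to get $I_*\subseteq[0,n_2)$ with $|I_*|=n_2/2^k=m$ and $\mathcal U_2(x_\ell)\cap I_*\in\{I_*,\emptyset\}$ for all $\ell<k$. Set $c:=y^3_{I_*}=\prod_{i\in I_*}y^3_i$ and $c_*:=z^1_{I_*}$ (legitimate since $I_*\in[n]^m$). This gives clauses (a) and (c) at once.

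The one substantive computation — and the place where all the defining relations of $G_2$ are used — is the identity
$$[x,c]=y^1_{\,\mathcal U_2(x)\cap I_*}\qquad\text{for every }x\in G_2$$
(with the commutator convention $[a,b]=aba^{-1}b^{-1}$). To prove it, write $x=y_{(3,\mathcal U_3(x))}\,y_{(2,\mathcal U_2(x))}\,y_{(1,x)}$ and expand $xcx^{-1}c^{-1}$: the factors $y_{(1,x)}\in G_1$ and $y_{(3,\mathcal U_3(x))}$ commute with $c$ by $(*)_2$(b)($\delta$) and $(*)_2$(b)($\gamma$), while the local relation $y^2_i y^3_j y^2_i=\mathfrak c(i,j)y^3_j$ of $(*)_2$(b)($\epsilon$), combined with the fact that $\langle\bar y^1\rangle$ is central in $\langle\bar y^1,\bar y^2,\bar y^3\rangle$ (clauses (b) and ($\delta$)), yields $y_{(2,\mathcal U_2(x))}\,y^3_{I_*}\,y_{(2,\mathcal U_2(x))}=y^1_{\mathcal U_2(x)\cap I_*}\,y^3_{I_*}$; the surviving copy of $y^3_{I_*}$ cancels against $c^{-1}$, leaving $y^1_{\mathcal U_2(x)\cap I_*}$. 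From this identity clause (b) is immediate: $[[x_\ell,c],c_*]=[y^1_{\mathcal U_2(x_\ell)\cap I_*},z^1_{I_*}]$, and since $\mathcal U_2(x_\ell)\cap I_*\in\{I_*,\emptyset\}$ the first argument is $y^1_{I_*}$ or $e$, so this is $e$ by clause~(\ref{item_zcommutesy}) of Definition~\ref{CR}.

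For clause (d): by the identity $B_I=\{a\in G_2:\mathcal U_2(a)\cap I_*=I\}$, using injectivity of $I\mapsto y^1_I$ on subsets of $[0,n)$ (which holds as $|\langle\bar y^1\rangle|=2^n$). These sets clearly partition $G_2$ as $I$ ranges over subsets of $I_*$, and by the normal form the fibre of $a\mapsto\mathcal U_2(a)$ has constant size $2^n|G_1|$, so $|B_I|=2^{\,n-m}\cdot 2^n|G_1|$ is independent of $I$; and $[[x,c],c_*]=[y^1_{\mathcal U_2(x)\cap I_*},z^1_{I_*}]=e$ iff $\mathcal U_2(x)\cap I_*\in\{\emptyset,I_*\}$ iff $x\in B_\emptyset\cup B_{I_*}$, again by clause~(\ref{item_zcommutesy}). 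In particular $|B_\emptyset\cup B_{I_*}|=2^{\,1-m}|G_2|$.

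Finally, clause (e). Put $s(x):=[[x,c],c_*]\in G_1$ and split $G_2\times G_2$ according to whether $s(x)=e$. If $s(x)=e$, i.e.\ $x\in B_\emptyset\cup B_{I_*}$, then every $y$ is counted, and by the previous paragraph these $x$ form a $2/2^m$ fraction of $G_2$, contributing exactly $\tfrac{2}{2^m}|G_2\times G_2|$ pairs. If $s(x)\neq e$, a short computation using $(*)_2$(b)($\delta$) (moving $s(x)\in G_1$ past the $y^2,y^3$ part of $y$ and cancelling) shows $[s(x),y]=[s(x),y_{(1,y)}]$, so $\{y:[s(x),y]=e\}$ is a union of $\mathcal U_2/\mathcal U_3$-fibres over $C_{G_1}(s(x))$ and has size $|C_{G_1}(s(x))|\cdot 2^{2n}<(|G_1|/n^2)\cdot 2^{2n}=|G_2|/n^2$ by clause~(e) of Definition~\ref{CR}; summing over the at most $|G_2|$ such $x$ gives $<\tfrac{1}{n^2}|G_2\times G_2|$. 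Adding the two contributions and invoking the hypothesis $\tfrac{2}{2^m}+\tfrac{1}{n^2}<\tfrac{1}{m}$ from $(*)_0$ yields the bound. The only real obstacle is the commutator identity of the second paragraph; once it is established, (b), (d) and (e) are bookkeeping with the normal form and the axioms of Definition~\ref{CR}.
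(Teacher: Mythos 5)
Your proposal is correct and follows essentially the same route as the paper: the same choice of $I_*$ via Lemma \ref{partitions}, the same key identity $[x,c]=y^1_{\mathcal U_2(x)\cap I_*}$ derived from the relations of Definition \ref{the_finite_groups}$(*)_2$, and for clause (e) the same split into the $x$ with $[[x,c],c_*]=e$ (a $2/2^m$ fraction) versus those with $[[x,c],c_*]\in G_1-\{e\}$, bounded by the centralizer condition of Definition \ref{CR}(e) and combined via $(*)_0$(d).
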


	\begin{proof} Let $x_\ell \in G_2$, for $\ell < k$, and let $I_* \subseteq [0, n_2)$ be as in Lemma \ref{partitions} with respect to $(x_0, ..., x_{k-1})$.
Let $c = \prod \{ y^3_i : i \in I_* \} = y_{(3, I_*)}$ and $c_* = z^1_{I_*}$ (cf. Definitions \ref{CR} and \ref{the_finite_groups}). We have to show that $(I_*, c, c_*)$ are as wanted. To this extent, let $a \in G_2$  and let:
			$$a = y_{(3, \mathcal{U}({3}))}
		   	y_{(2, \mathcal{U}({2}))}
		   	y_{(1, a)}$$ 
be as in Lemma~\ref{claim_a8}(\ref{item_words}), for $\mathcal{U}({\ell}) = \mathcal{U}({\ell}, a) \subseteq [0, n_{\ell})$, and $\ell \in \{ 2, 3 \}$. Notice that for $\ell \in \{ 2, 3 \}$ and $I_\ell \subseteq [0, n_\ell)$ we have that $(y^\ell_{I_\ell})^{-1} = y^\ell_{I_\ell}$ (cf. Notation \ref{not_prod}), since each element of the product has order $2$ and they all commute with each other. Then for any $a \in G_2$ we have that (recalling Lemma \ref{claim_a8} and letting $y_{(\ell, \mathcal{U}({\ell}))} = y_{(\ell, \mathcal{U}({\ell}, a))}$):
$$\begin{array}{rcl}
[a, c]& = & a^{-1} c^{-1} a c \\
    	& = & 
		   	  (y_{(1, a)})^{-1}
		   	  y_{(2, \mathcal{U}({2}))}
		   	  y_{(3, \mathcal{U}({3}))}  y_{(3, I_*)} y_{(3, \mathcal{U}({3}))}
		   	  y_{(2, \mathcal{U}({2}))}
		   	  y_{(1, a)} y_{(3, I_*)}\\
		& = &
		   	  y_{(2, \mathcal{U}({2}))}
		   	  y_{(3, \mathcal{U}({3}))}  y_{(3, I_*)} y_{(3, \mathcal{U}({3}))}
		   	  y_{(2, \mathcal{U}({2}))}
		   	   y_{(3, I_*)}\\
		& = &
		      y_{(2, \mathcal{U}({2}))}
		   	  y_{(3, I_*)}
		   	  y_{(2, \mathcal{U}({2}))} \hat{y}_{(3, I_*)} \;\;\;\;\;\;\;\;\;\;\;\;\;\;\;\;\;\;\;\;\;\;\;\;\;\;\;\;\;\;\; [\text{by \ref{the_finite_groups}$(*)_2(b)$($\beta$)-($\gamma$)}]\\
		& = & y_{(2, \mathcal{U}({2}) \cap I_*)}
		   	  y_{(3, I_*)}
		   	  y_{(2, \mathcal{U}({2}) \cap I_*)} y_{(3, I_*)} \;\;\;\;\;\;\;\;\;\;\;\;\;\;\;\;  [\text{by \ref{the_finite_groups}$(*)_2(a)$($\beta$)+$(b)$($\epsilon$)}]\\
		& = & y_{(2, \mathcal{U}({2}) \cap I_*)}
		   	  y_{(3, \mathcal{U}({2}) \cap I_*)}
		   	  y_{(2, \mathcal{U}({2}) \cap I_*)} y_{(3, \mathcal{U}({2}) \cap I_*)}
		   	  \, [\text{by \ref{the_finite_groups}$(*)_2(a)$($\beta$)+$(b)$($\epsilon$)}]\\
		& = & \prod_{i \in \mathcal{U}({2}) \cap I_*} \mathfrak{c}_2(i, i) \hspace{-0.020cm}\,\;\;\;\;\;\;\;\;\;\;\;\;\;\;\;\;\;\;\;\;\;\;\;\;\;\;\;\;\;\;\;\;\;\;\;\;\;\;\;\;\;\;\;\;\;\;\;\;\;\;\;\;\;[\text{by \ref{the_finite_groups}$(*)_2(b)$($\epsilon$)}]\\
		& = & y^1_{\mathcal{U}({2}) \cap I_*} \hspace{-0.08cm}\,\;\;\;\;\;\;\;\;\;\;\;\;\;\;\;\;\;\;\;\;\;\;\;\;\;\;\;\;\;\;\;\;\;\;\;\;\;\;\;\;\;\;\;\;\;\;\;\;\;\;\;\;\;\;\;\;\;\;\;\;\;\;\;\;\;\;\;\, [\text{by \ref{the_finite_groups}$(*)_2(a)$($\beta$)}]\\
		& = & y^1_{\mathcal{U}({2}, a) \cap I_*}.		
		\end{array} $$
Hence, recapitulating, we have:
\begin{equation}\tag{$\star$}
[a, c] = y^1_{\mathcal{U}({2}, a) \cap I_*}.
\end{equation}
Concerning clause (b), by Equation $(\star)$ for $a = x_\ell$, Lemma~\ref{partitions} and the fact that the triple $(G_{(\mathbf{p}, 1)}, \bar{y}^1, \bar{z}^1) \in \mathbf{CR}_{(n, m)}$ we have that $[x_{\ell}, c] = e$ or $[x_{\ell}, c] = y^1_{I_*}$, and in both cases $[x_{\ell}, c]$ commutes with $z^1_{I_*} = c_*$ (cf. Definition \ref{CR}(\ref{item_zcommutesy})). Clause (c) holds by Lemma~\ref{partitions}, since by choice $|I_*| = n_2/2^{k}$. As for clause (d), clearly, the $(B_I : I \subseteq I_*)$ are pairwise disjoint, since $a \in B_{I_1} \cap B_{I_2}$ implies $y^1_{I_1} = [a, c] = y^1_{I_2}$, and for $I_1 \neq I_2$ we have that $y^1_{I_1} \neq y^1_{I_2}$ (cf. Definition \ref{CR}(\ref{the_y})); moreover, by Equation $(\star)$, if $a \in G_2$, then $[a, c] = y^1_{\mathcal{U}({2}, a) \cap I_*} \in \{ y^1_I : I \subseteq I_*\}$, and for $I \subseteq I_*$ we have that $[y^1_I, y^1_{I_*}] = e$ if and only if $I \in \{ \emptyset, I_* \}$ (cf. Definition \ref{CR}(\ref{item_zcommutesy})); and finally the pieces of the partition are of equal size since given a finite set $X$, a subset $Y$ of $X$ and two subsets $c_1$ and $c_2$ of $Y$ we have that $|\{ Z \subseteq X : Z \cap Y = c_1 \}| = |\{ Z \subseteq X : Z \cap Y = c_2 \}|$.
Concerning clause (e), let:
\begin{enumerate}[(a)]
	\item $X = \{ (x, y) \in G_2 \times G_2 : [[[x, c], c_*], y] = e \}$;
	\item $X_1 = \{ (x, y) \in G_2 \times G_2 : [x, c] \in \{ y^1_{I_*}, e \} \}$;
	\item $X_2 = \{ (x, y) \in X : [x, c] \in \{ y^1_{I} : I \subseteq I_*, I \notin \{ I_*, \emptyset\} \} \};$
\end{enumerate} 
Clearly $X = X_1 \cup X_2$ and $X_1 \cap X_2 = \emptyset$.
Now, on one hand, we have:
\begin{equation}\label{equation1}
|X_1| \leq |G_2 \times G_2| \cdot \frac{|\{ \emptyset, I_*\}|}{2^{|I_*|}} = |G_2 \times G_2| \cdot \frac{2}{2^{|I_*|}}.
\end{equation}
While, on the other hand, we have:
\begin{equation}\label{equation2}
	|X_2| \leq \frac{|G_2 \times G_2|}{n^2}.
\end{equation}
[Why does (\ref{equation2}) hold? First of all notice that:
\begin{enumerate}[$\oplus_1$]
	\item if $x \in B_I$, $\mathcal{U}(2, x) \cap I_* = I \subseteq I_*$, $I \notin \{ I_*, \emptyset \}$, then:
	\begin{enumerate}[(a)]
	\item $[[x, c], c_*] \neq e$ (by clause (d) of the current lemma);
	\item $[[x, c], c_*] \in G_1$ (because by $(\star)$ $[x, c] = y^1_{\mathcal{U}({2}, x) \cap I_*} \in G_1$, and $c_* = z^1_{I_*} \in G_1$).
	\end{enumerate}
\end{enumerate}
Secondly, notice that:
\begin{enumerate}[$\oplus_2$]
	\item \begin{enumerate}[(a)]
	\item if $t = G_1 - \{e \}$, then:
	$$\begin{array}{rcl}
Z_t   & := & \{ x \in G_2 : [t, x] = e \} \\
      & = & \{ x \in G_2 :  x = y_{(3, \mathcal{U}(3))}
		  y_{(2, \mathcal{U}(2))}
		  y_{(1, x)} \text{ and } [y_{(1, x)}, t] = e \} \; (\text{cf. Lemma \ref{claim_a8}}); 		
	  \end{array} $$
	\item and so for $t = G_1 - \{e \}$ we have:
	$$\begin{array}{rcl}
|Z_t| & \leq & 2^{n_3} \cdot 2^{n_2} \cdot |\{ y_1 \in G_1 : [y_1, t] = e \}|\\
      & \leq & |G_2| \cdot \frac{1}{|G_1|} \cdot max_{t \in G_1 - \{ e \}} |\{ y_1 \in G_1 : [y_1, t] = e \}|; 
	  \end{array} $$
	\item and thus, by (b) and Definition \ref{CR}(e), we have:
	$$t \in G_1 - \{ e \} \; \Rightarrow \; |Z_t| \leq |G_2| \cdot \frac{1}{n^2}.$$
	\end{enumerate}
\end{enumerate}
Hence, we have:
$$ \begin{array}{rcl}
|X_2| & \leq & |G_2| \cdot max_{\substack{x \in G_2 \\ \mathcal{U}(2, x) \cap I_* \notin  \{\emptyset, I_* \}}} |\{ y \in G_2 : [[[x, c], c_*], y] = e \}| \\
      & \leq & |G_2| \cdot max_{t \in G_1 - \{ e \}} |\{ y \in G_2 : [y, t] = e \}| \;\;\;\; \text{ [by $\oplus_1$]}\\
      & \leq & \displaystyle \frac{|G_2 \times G_2|}{n^2} \;\;\;\;  \text{ [by $\oplus_2$(c)].}		
		\end{array} $$
That is, Equation (\ref{equation2}) holds as promised. This closes the ``Why (2)?'' above.]
\newline Hence, putting together (\ref{equation1}) and (\ref{equation2}) we have:
$$|\{ (x, y) \in G_2 \times G_2 : G_2 \models [[[x, c], c_*], y] = e \}| \leq |G_2 \times G_2| \cdot (\frac{2}{2^{|I_*|}} + \frac{1}{n^2}) \leq \frac{|G_2 \times G_2|}{m},$$
by the choice of $m$ and $n$, in fact by (c) of this lemma we have that $|I_*| = n_2/2^{k}$ and, by Definition (\ref{the_finite_groups})$(*)_{0}$(d) and Notation \ref{notation_n2}, $n_2/2^{k} = n/2^{k} = 2^km/2^k= m$.
\end{proof}

	\begin{conclusion}\label{crucial_conclusion} Assume that $\mathbf{p} \in \mathbf{CR}$ (cf. Definition \ref{the_finite_groups}). If $x_\ell \in G_2 = G_{(\mathbf{p}, 2)}$, for $\ell < k = k_{\mathbf{p}}$, then for some $c_1, c_2 \in G_2$ we have:
	\begin{enumerate}[(a)]
	\item $G_2 \models [[x_\ell, c_1], c_2] = e$;
	\item $\{ y \in G_2 : G_2 \models [[[x_\ell, c_1], c_2], y] = e \} = G_2$;
	\item $|\{ (x, y) \in  G_2 \times G_2 : G_2 \models [[[x, c_1], c_2], y] = e\}| \leq |G_2 \times G_2|/m$.
\end{enumerate}	 
\end{conclusion}

	\begin{proof} This is clear from Lemma \ref{crucial_lemma} letting $c_1 = c$ and $c_2 = c_*$, for $c, c_*$ as there.
\end{proof}

\section{The Solution}

	\begin{notation}\label{notation_solution} (Recalling the notation of Definition \ref{the_finite_groups}.) We choose $(f_1, g_1)$ and $(f_2, g_2)$ such that:
	\begin{enumerate}[(a)]
	\item $f_1, g_1, f_2, g_2$ are strictly increasing functions from $\omega^\omega$;
	\item $f_\ell(n) > g_\ell(n)$, for $\ell \in \{ 1, 2 \}$ and $n < \omega$;
	\item $(f_1, g_1)$ and $(f_2, g_2)$ are sufficiently different (as in \cite{forcing}), e.g., for every $i < \omega$ we have $2^{2^{f_1(i)}} < g_2(i)$ and $2^{2^{f_2(i)}} < g_2(i+1)$;
	\item for every $i < \omega$, there is $\mathbf{p}_i \in \mathbf{CR}$ (cf. Definition \ref{the_finite_groups}) such that:
	\begin{enumerate}
	\item $f_1(i) = |G_{(\mathbf{p}_i, 2)}|$;
	\item $g_{2}(i) = k_{\mathbf{p}_i}$;
	\end{enumerate}
	\item $\sum_{i < \omega} \frac{g_2(i)}{f_2(i)} < \infty$;
	\item\label{f} for $i < \omega$, let $(m^*_i, m^{**}_i) = (g_2(i), f_2(i))$;
	\item for $i < \omega$, let $k_{\mathbf{p}_i} = k_i$, $m_{\mathbf{p}_i} = m_i$, $n_{\mathbf{p}_i} = n_i$ and $G^*_i = G_{(\mathbf{p}_i, 2)}$;
	\item let $G_* = \prod_{i < \omega} G^*_i$.
\end{enumerate}
\end{notation}

	\begin{observation}
	\begin{enumerate}[(1)]
	\item For every $i < \omega$, $G^*_i$ is a finite group.
	\item $G_*$ is a metrizable profinite group (cf. Definition \ref{def_metr_pro}).
	\end{enumerate}
\end{observation}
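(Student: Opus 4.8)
The plan is to handle the two clauses in turn; each reduces at once to a fact already in hand, and the only work is a routine presentation of a countable product of finite groups as an inverse limit.

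\emph{Clause (1).} By Notation \ref{notation_solution}(d) and (g) we have fixed, for each $i<\omega$, a tuple $\mathbf{p}_i\in\mathbf{CR}$ and set $G^*_i=G_{(\mathbf{p}_i,2)}$. So the finiteness of $G^*_i$ is literally the first assertion of Lemma \ref{claim_a8}(1) applied to $\mathbf{p}=\mathbf{p}_i$. If one prefers to see it directly: $G_{(\mathbf{p}_i,1)}$ is finite by Definition \ref{CR}(a) (since $(G_{(\mathbf{p}_i,1)},\bar y^1,\bar z^1)\in\mathbf{CR}_{(n_i,m_i)}$ by $(*)_1$ of Definition \ref{the_finite_groups}), and $G_{(\mathbf{p}_i,2)}$ is built from it by adjoining the finitely many generators $y^2_j,y^3_j$ $(j<n_i)$, each of order $2$; by Lemma \ref{claim_a8}(2) every element of $G_{(\mathbf{p}_i,2)}$ has a unique normal form $y_{(3,\mathcal{U}(3))}\,y_{(2,\mathcal{U}(2))}\,y_{(1,x)}$ with $\mathcal{U}(\ell)\subseteq[0,n_i)$ and $y_{(1,x)}\in G_{(\mathbf{p}_i,1)}$, so $|G^*_i|=2^{n_i}\cdot 2^{n_i}\cdot|G_{(\mathbf{p}_i,1)}|<\infty$.

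\emph{Clause (2).} I would exhibit $G_*=\prod_{i<\omega}G^*_i$ (Notation \ref{notation_solution}(h)) in the form demanded by Definition \ref{def_metr_pro}. For $j<\omega$ put $H_j=\prod_{i<j}G^*_i$ (so $H_0$ is the trivial group), which is finite by clause (1), being a finite product of finite groups; let $\varphi_j\colon H_{j+1}\to H_j$ be the group homomorphism forgetting the last coordinate. Then $\bar\varphi=(\varphi_j:j<\omega)$ makes $(H_j:j<\omega)$ an $(\omega,<)$-inverse system of finite groups, and the natural map $G_*\to\varprojlim^{\bar\varphi}_{j<\omega}H_j$ sending $(g_i)_{i<\omega}$ to the coherent sequence $((g_i)_{i<j})_{j<\omega}$ is a group isomorphism and a homeomorphism for the respective product topologies. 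Hence $G_*$ is (isomorphic to) an inverse $\bar\varphi$-limit of an $(\omega,<)$-inverse system of finite groups, i.e. a metrizable profinite group.

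I do not expect any genuine obstacle here: both points are bookkeeping. The only delicate matter is external to the Observation, namely that Notation \ref{notation_solution} is legitimate in committing to the $\mathbf{p}_i$ — i.e. that one can choose $\mathbf{p}_i\in\mathbf{CR}$ with $f_1(i)=|G_{(\mathbf{p}_i,2)}|$, $g_2(i)=k_{\mathbf{p}_i}$ and the growth/summability constraints (c) and (e) of Notation \ref{notation_solution}; this rests on Lemma \ref{pre_finite_groups} together with the freedom in Definition \ref{the_finite_groups} to take $k,m,n$ as large as one wishes and to interleave the resulting sizes with $f_2,g_2$. That verification belongs to the setup rather than to the Observation, so I would not reprove it in this proof.
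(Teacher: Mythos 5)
Your proposal is correct and follows the paper's own route: clause (1) is exactly an appeal to Lemma \ref{claim_a8} (which you supplement with the optional counting via the normal form), and clause (2) is the standard presentation of a countable product of finite groups as an inverse limit of the finite partial products, which is what the paper's ``by definition'' amounts to. The extra remark about the legitimacy of choosing the $\mathbf{p}_i$ in Notation \ref{notation_solution} is sensible but, as you note, not part of the Observation.
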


	\begin{proof} Item (1) is by Lemma \ref{claim_a8}. Item (2) is by definition.
\end{proof}

\begin{notation} 
	\begin{enumerate}[(1)]
	\item We denote by $w(x, y, \bar{z})$, for $\bar{z} = (z_1, z_2)$, the group word:
	$$[[[x, z_1], z_2], y].$$
From now till the end of the paper the letter $w$ will denote this specific word.
	\item Recall Notation \ref{notation_measure}, i.e. we denote by $\mu$ the Haar measure.
\end{enumerate}
\end{notation}

	\begin{notation}\label{notation_for_crucial_lemma}
	\begin{enumerate}[(1)]
	\item For $\bar{c} \in G_* \times G_*$, let:
	 $$X_{\bar{c}} = \{ x \in G_* : \mu(\{ y \in G_* : w(x, y, \bar{c}) \}) > 0 \}.$$
	\item Let $\mathfrak{C} = \{ \bar{c} \in G_* \times G_*: \mu(\{ (x, y) \in G_* \times G_* : w(x, y, \bar{c}) \}) = 0 \}$.
	\end{enumerate}
\end{notation}

	\begin{lemma}\label{cruc_lemma1} A sufficient condition for $\mathfrak{fm}(G_*) \leq \lambda$ (cf. Definition \ref{fm_card}) is:
\begin{enumerate}[$(\star)_1$]
	\item there is $\mathcal{F} \subseteq \prod_{i < \omega} [G^*_i]^{k_i}$ of cardinality $\leq \lambda$ such that:
	\begin{equation} \tag{A} (\forall \eta \in \prod_{i < \omega} G^*_i) (\exists \nu \in \mathcal{F})[\eta(i) \in \nu(i)].
\end{equation}
\end{enumerate}
\end{lemma}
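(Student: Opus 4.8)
The plan is to use the family $\mathcal{F}$ supplied by $(\star)_1$ to produce, for each $\nu \in \mathcal{F}$, a parameter pair $\bar{c}_\nu \in G_* \times G_*$ such that $\{ X_{\bar{c}_\nu} : \nu \in \mathcal{F} \}$ is a family of at most $\lambda$ Fubini-Markov sets (cf. Definition \ref{alg_set_def}(\ref{fub_mark_def})) whose union is all of $G_*$; since $\mu(G_*) = 1$, this gives $\mathfrak{fm}(G_*) \leq \lambda$ by Definition \ref{fm_card}. The pair $\bar{c}_\nu$ is built coordinatewise via Conclusion \ref{crucial_conclusion}: given $\nu \in \mathcal{F}$ and $i < \omega$, the set $\nu(i) \in [G^*_i]^{k_i}$ has exactly $k_i = k_{\mathbf{p}_i}$ elements, so enumerating it as $(x^i_\ell : \ell < k_i)$ and applying Conclusion \ref{crucial_conclusion} inside $G^*_i = G_{(\mathbf{p}_i, 2)}$ (with $m = m_i := m_{\mathbf{p}_i}$) yields $c^i_1, c^i_2 \in G^*_i$ with the following two properties: first, for every $x \in \nu(i)$ one has $\{ y \in G^*_i : G^*_i \models w(x, y, c^i_1, c^i_2) = e \} = G^*_i$; second, $|\{ (x, y) \in G^*_i \times G^*_i : G^*_i \models w(x, y, c^i_1, c^i_2) = e \}| \leq |G^*_i \times G^*_i| / m_i$. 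I then set $c^\nu_j = (c^i_j : i < \omega) \in G_*$ for $j \in \{ 1, 2 \}$ and $\bar{c}_\nu = (c^\nu_1, c^\nu_2)$.

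The first thing to check is that $\bar{c}_\nu \in \mathfrak{C}$, which is what makes $X_{\bar{c}_\nu}$ a Fubini-Markov set. Since the group operations of $G_* \times G_* = \prod_{i < \omega} (G^*_i \times G^*_i)$ are computed coordinatewise, so is the word $w$; hence the elementary algebraic set $A := \{ (y, x) \in G_* \times G_* : G_* \models w(x, y, \bar{c}_\nu) = e \}$ equals $\{ \xi \in \prod_{i} (G^*_i \times G^*_i) : \xi(i) \in A^i \text{ for all } i < \omega \}$, where $A^i := \{ (y, x) \in G^*_i \times G^*_i : G^*_i \models w(x, y, c^i_1, c^i_2) = e \}$. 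As $(e, e) \in A^i$ for all $i$, the sets $B_j := \prod_{i \leq j} A^i$ form a compatible system for the presentation $G_* \times G_* = \varprojlim_{j} \prod_{i \leq j} (G^*_i \times G^*_i)$, and $A = \bigcap_j \pi_j^{-1}(B_j)$; so by Fact \ref{description_measure} together with the second property of $c^i_1, c^i_2$,
\[
\mu(A) \;=\; \lim_{j \to \infty} \prod_{i \leq j} \frac{|A^i|}{|G^*_i \times G^*_i|} \;\leq\; \lim_{j \to \infty} \prod_{i \leq j} \frac{1}{m_i} \;=\; 0,
\]
the last equality because $m_i > k_i = g_2(i)$ with $g_2$ strictly increasing, so $m_i \to \infty$. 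Thus $A$ is an elementary algebraic null subset of $(G_*)^2$, so $\bar{c}_\nu \in \mathfrak{C}$; and since $A_x = \{ y \in G_* : G_* \models w(x, y, \bar{c}_\nu) = e \}$, the set $X_{\bar{c}_\nu} = \{ x \in G_* : \mu(A_x) > 0 \}$ is Fubini-Markov (Definition \ref{alg_set_def}(\ref{fub_mark_def})(b), with $n = 2$).

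It remains to check $\bigcup_{\nu \in \mathcal{F}} X_{\bar{c}_\nu} = G_*$. Fix $\eta \in G_* = \prod_{i < \omega} G^*_i$. By property (A) of $(\star)_1$ there is $\nu \in \mathcal{F}$ with $\eta(i) \in \nu(i)$ for every $i < \omega$; then, computing $w$ coordinatewise and using the first property of $c^i_1, c^i_2$ above at each $i$,
\[
\{ y \in G_* : G_* \models w(\eta, y, \bar{c}_\nu) = e \} \;=\; \prod_{i < \omega} \{ y \in G^*_i : G^*_i \models w(\eta(i), y, c^i_1, c^i_2) = e \} \;=\; \prod_{i < \omega} G^*_i \;=\; G_*,
\]
so $\mu(\{ y \in G_* : w(\eta, y, \bar{c}_\nu) = e \}) = 1 > 0$, i.e.\ $\eta \in X_{\bar{c}_\nu}$. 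Hence $\{ X_{\bar{c}_\nu} : \nu \in \mathcal{F} \}$ is a family of at most $|\mathcal{F}| \leq \lambda$ Fubini-Markov sets whose union is $G_*$, and therefore $\mathfrak{fm}(G_*) \leq \lambda$.

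All the combinatorial content is already isolated in Conclusion \ref{crucial_conclusion} (which rests on Lemmas \ref{partitions} and \ref{crucial_lemma}), so I expect no serious obstacle; the two points that need care are (i) that the word $w$ is genuinely evaluated coordinatewise, which is what lets both the null-set estimate and the covering computation reduce to the finite groups $G^*_i$, and (ii) that $\prod_i 1/m_i = 0$, which is immediate from $k_i = g_2(i) \to \infty$ together with $k_i < m_i$.
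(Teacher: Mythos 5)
Your proposal is correct and follows essentially the same route as the paper: apply Conclusion \ref{crucial_conclusion} coordinatewise to each $\nu(i)$ to build $\bar{c}_\nu$, then use Fact \ref{description_measure} to see that property (b) of the conclusion makes $\bar{c}_\nu \in \mathfrak{C}$ (so each $X_{\bar{c}_\nu}$ is Fubini--Markov) and that property (a) together with (A) makes the sets $X_{\bar{c}_\nu}$ cover $G_*$. You merely spell out the measure computation and the Fubini--Markov bookkeeping that the paper leaves implicit, and these details check out.
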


	\begin{proof} For every $\nu \in \mathcal{F}$ and $i < \omega$, $\nu(i) \in [G^*_i]^{k_i}$, hence, by Conclusion \ref{crucial_conclusion}, there are $c^{\nu}_{i, 1}, c^{\nu}_{i, 2} \in G^*_i \times G^*_i$ such that letting $\bar{c}^\nu_i = (c^{\nu}_{i, 1}, c^{\nu}_{i, 2})$ we have:
	\begin{enumerate}[(a)]
	\item if $x \in \nu(i)$, then $|\{ y \in G^*_i: w(x, y, \bar{c}^\nu_i) = e \}| = |G^*_i|$;
	\item $|\{ (x, y) \in G^*_i \times G^*_i : w(x, y, \bar{c}^\nu_i) = e\}| \leq |G^*_i \times G^*_i| / m$.
	\end{enumerate}
Let now $\bar{c}_\nu = (\bar{c}_{\nu(1)}, \bar{c}_{\nu(2)}) \in G_* \times G_*$, where, for $\ell \in \{ 1, 2 \}$, $\bar{c}_{\nu(\ell)} = (c^\nu_{i, \ell} : i < \omega)$. Then we have (recalling Notation \ref{notation_for_crucial_lemma}):
\begin{enumerate}[(a')]
	\item $G_* \subseteq \{ X_{\bar{c}_\nu} : \nu \in \mathcal{F}\}$ (by Fact \ref{description_measure}, (A) of the statement, and (a) above);
	\item $\bar{c}_\nu \in \mathfrak{C}$ (by Fact \ref{description_measure} and (b) above).
	\end{enumerate}
Hence, by (a') and (b'), we have that $\{ X_{\bar{c}_\nu} : \nu \in \mathcal{F}\}$ is a witness for $\mathfrak{fm}(G_*) \leq \lambda$.
\end{proof}

	\begin{lemma}\label{cruc_lemma2} Recalling Notation \ref{notation_solution}(\ref{f}), a sufficient condition for $\mathrm{non}(\mathcal{N}) > \lambda$ (cf. Definition \ref{non_null}) is:
\begin{enumerate}[$(\star)_2$]
	\item for every $Y \subseteq \prod_{i < \omega} m^{**}_i$ of cardinality $\leq \lambda$ there is $\nu$ such that:
\begin{enumerate}[(a)]
	\item $\nu \in \prod_{i < \omega} [m^{**}_i]^{m^*_i}$;
	\item if $\eta \in Y$, then, for infinitely many $i < \omega$, we have that $\eta(i) \in \nu(i)$.
\end{enumerate}
\end{enumerate}
\end{lemma}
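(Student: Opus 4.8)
The plan is to pass to the product probability space $\mathbf{X} = \prod_{i<\omega} m^{**}_i$ carrying the product measure $\bar{\mu} = \prod_{i<\omega} \bar{\mu}_i$, where $\bar{\mu}_i$ is the uniform probability measure on the finite set $m^{**}_i = f_2(i)$ (identified, as usual, with $\{0,\dots,m^{**}_i-1\}$). Since $f_2$ is strictly increasing we have $m^{**}_i \to \infty$, so all but finitely many factors are non-trivial and $\bar{\mu}$ is atomless; being a countable product of finite probability spaces it is a standard atomless probability space, hence measure-isomorphic to the Cantor space $2^\omega$ equipped with its standard product measure (whose ideal of null sets is $\mathcal{N}$, cf.\ Definition \ref{non_null}). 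Consequently the least cardinality of a $\bar{\mu}$-non-null subset of $\mathbf{X}$ equals $\mathrm{non}(\mathcal{N})$, and it suffices to show that, granting $(\star)_2$, every $Y \subseteq \mathbf{X}$ with $|Y| \leq \lambda$ is $\bar{\mu}$-null.

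So fix such a $Y$, apply $(\star)_2$ to it, and let $\nu \in \prod_{i<\omega}[m^{**}_i]^{m^*_i}$ be the resulting function. For $i<\omega$ set $B_i = \{\eta \in \mathbf{X} : \eta(i) \in \nu(i)\}$. Since $B_i$ depends only on the $i$-th coordinate and $|\nu(i)| = m^*_i = g_2(i)$, we get $\bar{\mu}(B_i) = m^*_i/m^{**}_i = g_2(i)/f_2(i)$, and by Notation \ref{notation_solution}(e) we have $\sum_{i<\omega}\bar{\mu}(B_i) = \sum_{i<\omega} g_2(i)/f_2(i) < \infty$. The first Borel--Cantelli lemma then yields that
$$A_\nu \ := \ \bigcap_{j<\omega}\bigcup_{i\geq j} B_i \ = \ \{\eta \in \mathbf{X} : \eta(i) \in \nu(i) \text{ for infinitely many } i<\omega\}$$
is $\bar{\mu}$-null. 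But clause (b) of $(\star)_2$ asserts exactly that $Y \subseteq A_\nu$; hence $Y$ is $\bar{\mu}$-null, as needed, and therefore $\mathrm{non}(\mathcal{N}) > \lambda$.

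None of the steps is genuinely hard; the only two points deserving a sentence of justification are (i) the identification of $\mathrm{non}$ of the null ideal of $\mathbf{X}$ with $\mathrm{non}(\mathcal{N})$ — this is the uniqueness up to measure isomorphism of the standard atomless probability space, and if one prefers to avoid it one can instead fix a partition $\langle I_i : i<\omega\rangle$ of $\omega$ into consecutive finite blocks with $2^{|I_i|} \geq m^{**}_i$ and realize $\mathbf{X}$ as a measure-preserving image of a conull subset of $2^\omega$ — and (ii) the invocation of Borel--Cantelli, for which no independence is required (though here the $B_i$ are in fact independent, being determined by pairwise disjoint coordinates). I would carry the argument out in the direct ``$(\star)_2 \Rightarrow$ conclusion'' form above rather than by contradiction, since it is cleanest that way.
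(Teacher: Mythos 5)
Your proof is correct and is essentially the paper's own argument: the paper also works with the product (``Lebesgue'') measure on $\prod_{i<\omega} m^{**}_i$ and shows the outer measure of $Y$ vanishes because $Y$ is contained in the set of $\eta$ hitting $\nu(i)$ for infinitely many $i$, whose measure is killed by the tail sums of $\sum_i g_2(i)/f_2(i)<\infty$ --- i.e.\ exactly your Borel--Cantelli step. The only difference is cosmetic: you make explicit the measure-isomorphism with $2^\omega$ (which the paper leaves implicit) and cite Borel--Cantelli by name instead of writing out the monotone-limit/tail-sum estimate.
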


	\begin{proof} This is because denoting by $\mu$ (resp. $\mu^*$) the Lebesgue measure (resp. the outer Lebesgue measure) of the Polish space $\prod_{i < \omega} m^{**}_i$
we have that:
$$
\begin{array}{rcl}\label{equation}
\mu^*(Y)  & \leq & \mu^*(\underbrace{\{ \eta \in X : \exists^{\infty} i (\eta(i) \in \nu(i) ) \}}_{X_{\infty}}) \;\;\;\;\;\;\;\;\;\;\;\;\;\;\;\;\;\;\;\;\;\;\;\;\;\;\;\;\;\;\;\;\;\;\;\;\;\;\;\;\; \text{[by $(\star)_2(b)$]}\\
		  & \leq & \mu(\bigcap_{n < \omega} \underbrace{\{ \eta \in X : \bigvee_{i \geq n} \eta(i) \in \nu(i) \}}_{X_n}) \;\;\;\;\;\;\;\;\;\;\;\;\;\;\;\;\;\;\;\;\;\;\text{ [$X_{\infty} \subseteq X_n$, $\forall n < \omega$]}\\
		  & \leq & lim_{n \rightarrow \infty} \mu(\{ \eta \in X : \bigvee_{i \geq n} \eta(i) \in \nu(i) \}) \text{ [$X_{n}$ measurable, $X_n \supseteq X_{n+1}$]}\\
		  & \leq & lim_{n \rightarrow \infty} \frac{m^{*}_n}{m^{**}_n} = 0 \hspace{0.015cm}\;\;\;\text{ [cf. Notation \ref{notation_solution}($f$) and properties of $f_2, g_2$ there]}.
\end{array} $$
\end{proof}

	\begin{theorem}\label{cons_th} Assume that $\mathbf{V} \models CH$. Then for some $\aleph_2$-c.c. proper (in fact even cardinal preserving) forcing $\mathbb{P}$ we have that in $\mathbf{V}[\mathbb{P}]$ both of the conditions below are satisfied:
\begin{enumerate}[(a)]
	\item the statement $(\star)_1$ from Lemma \ref{cruc_lemma1} for $\lambda = \aleph_1$;
	\item the statement $(\star)_2$ from Lemma \ref{cruc_lemma2} for $\lambda = \aleph_1$.
\end{enumerate}
\end{theorem}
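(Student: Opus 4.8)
The plan is to realize $\mathbb{P}$ as a countable support iteration $\langle \mathbb{P}_\alpha, \dot{\mathbb{Q}}_\alpha : \alpha < \omega_2\rangle$ in which every iterand $\dot{\mathbb{Q}}_\alpha$ is a $\mathbb{P}_\alpha$-name for one fixed ``forcing with norms'' $\mathbb{Q} = \mathbb{Q}_{(f_1,g_1),(f_2,g_2)}$ of the kind provided by \cite{forcing}, engineered from the two sufficiently different pairs fixed in Notation \ref{notation_solution}. The forcing $\mathbb{Q}$ must do two jobs at once. \emph{Job at scale $(f_2,g_2)$:} $\mathbb{Q}$ adds a generic slalom $\dot\nu_* \in \prod_{i<\omega}[m^{**}_i]^{m^*_i}$ (recall $m^*_i = g_2(i)$, $m^{**}_i = f_2(i)$) such that for every $\eta \in \prod_{i<\omega}m^{**}_i$ in the ground model one has $\eta(i) \in \dot\nu_*(i)$ for infinitely many $i$. \emph{Job at scale $(f_1,g_1)$:} $\mathbb{Q}$ has the \emph{$(f_1,g_1)$-localization property}, i.e.\ for every $\eta \in \prod_{i<\omega}G^*_i$ in the extension there is a slalom $\nu \in \prod_{i<\omega}[G^*_i]^{k_i}$ already in the ground model with $\eta(i) \in \nu(i)$ for every $i$. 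In addition $\mathbb{Q}$ should be proper, of size $\leq 2^{\aleph_0}$, and presented in the canonical form of \cite{forcing} for which the iteration-preservation theorem below applies. The reason one forcing can do both jobs is precisely that $(f_1,g_1)$ and $(f_2,g_2)$ are sufficiently different (Notation \ref{notation_solution}(c)): the combinatorial splitting performed by $\mathbb{Q}$ is confined to scale $2$, and by the growth gap between $|G^*_i|$ and $k_i$ it is far too coarse to produce a $\prod G^*_i$-real escaping all ground-model width-$k_i$ slaloms. This is exactly the kind of construction \cite{forcing} is designed to supply.

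Granting such a $\mathbb{Q}$, fix it and form $\mathbb{P} = \mathbb{P}_{\omega_2}$. Since $\mathbf{V} \models CH$ and each iterand has size $\leq \aleph_1$, the countable support iteration $\mathbb{P}$ is proper and satisfies the $\aleph_2$-c.c.\ (by Shelah's iteration theorems, e.g.\ via the $\aleph_2$-properness-isomorphism condition), hence preserves all cardinals and cofinalities; this gives the ``$\aleph_2$-c.c.\ proper, cardinal preserving'' part of the statement. For clause $(b)$, let $Y \subseteq \prod_{i<\omega}m^{**}_i$ with $|Y| \leq \aleph_1$ in $\mathbf{V}[\mathbb{P}]$. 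As $\mathrm{cf}(\omega_2) = \omega_2 > \omega_1$ and, by the $\aleph_2$-c.c., every real of $\mathbf{V}[\mathbb{P}]$ already lies in some $\mathbf{V}[\mathbb{P}_\beta]$ with $\beta < \omega_2$, there is $\alpha < \omega_2$ with $Y \subseteq \mathbf{V}[\mathbb{P}_\alpha]$. Let $\nu$ be the slalom $\dot\nu_*$ added by $\dot{\mathbb{Q}}_\alpha$ over $\mathbf{V}[\mathbb{P}_\alpha]$; then $\nu \in \prod_{i<\omega}[m^{**}_i]^{m^*_i}$, and by the scale-$(f_2,g_2)$ job applied inside $\mathbf{V}[\mathbb{P}_\alpha]$ every $\eta \in Y$ satisfies $\eta(i) \in \nu(i)$ for infinitely many $i$. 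Since that last statement is arithmetic in the pair $(\eta,\nu)$ it is absolute, so it still holds in $\mathbf{V}[\mathbb{P}]$. Thus $\nu$ witnesses $(\star)_2$ for $\lambda = \aleph_1$, i.e.\ clause $(b)$.

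For clause $(a)$ we invoke preservation. The $(f_1,g_1)$-localization property is of the type preserved under countable support iteration of proper forcings; applying the relevant preservation theorem of \cite{forcing} (in the spirit of Shelah's preservation theorems for proper iterations), the full iteration $\mathbb{P}$ has the $(f_1,g_1)$-localization property \emph{over} $\mathbf{V}$: for every $\eta \in \prod_{i<\omega}G^*_i$ in $\mathbf{V}[\mathbb{P}]$ there is a slalom $\nu \in (\prod_{i<\omega}[G^*_i]^{k_i})^{\mathbf{V}}$ with $\eta(i) \in \nu(i)$ for every $i$. Set $\mathcal{F} = (\prod_{i<\omega}[G^*_i]^{k_i})^{\mathbf{V}}$. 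Since $\mathcal{F} \subseteq \prod_{i<\omega}\mathcal{P}(G^*_i)$ and $\mathbf{V} \models CH$, we have $|\mathcal{F}| \leq (2^{\aleph_0})^{\mathbf{V}} = \aleph_1$, and by the localization property $\mathcal{F}$ satisfies condition (A) of Lemma \ref{cruc_lemma1} (even read only with ``for all but finitely many $i$'', which is weaker; and the everywhere version is exactly what the localization property delivers). Hence $(\star)_1$ holds for $\lambda = \aleph_1$, which is clause $(a)$.

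The main obstacle is the construction of the single iterand $\mathbb{Q}$ carrying the two ostensibly conflicting properties — adding a generic infinitely-often-catching slalom at scale $(f_2,g_2)$ while being so tame that every new $\prod G^*_i$-real is everywhere captured by an old width-$k_i$ slalom — together with the verification that the $(f_1,g_1)$-localization property survives the $\omega_2$-length countable support iteration. This is exactly where the assumption that $(f_1,g_1)$ and $(f_2,g_2)$ are sufficiently different enters essentially: one must design the norms of $\mathbb{Q}$ so that all genuine ``bigness'' of a condition is spent at scale $2$, prove the localization property for $\mathbb{Q}$ by a fusion argument, phrase it in a form closed under composition and under countable support limits, and then run the iteration-preservation machinery of \cite{forcing}. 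The remaining points — that $\mathbb{P}$ is proper, $\aleph_2$-c.c.\ and cardinal preserving, and that both $(\star)$-statements reflect to a stage $<\omega_2$ — are routine once $\mathbb{Q}$ is in hand.
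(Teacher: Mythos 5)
Your outline identifies the right source, but as a proof it has a genuine gap: the entire mathematical content of the theorem is concentrated in the existence of your iterand $\mathbb{Q}$ with the ``two jobs'' (adding an $\exists^\infty$-catching slalom at scale $(f_2,g_2)$ while keeping every new element of $\prod_i G^*_i$ inside a ground-model width-$k_i$ slalom) \emph{together with} a preservation theorem pushing the localization property through an $\omega_2$-length countable support iteration. You explicitly defer exactly these points (``the main obstacle is the construction of the single iterand \dots''), asserting that \cite{forcing} is ``designed to supply'' them. That is not a proof, and it is also not an accurate description of what \cite{forcing} provides: Kellner--Shelah do not work with a countable support iteration of a single Suslin-type norms forcing plus an iteration-preservation theorem for an $(f_1,g_1)$-localization property; their construction is a (countable support) product of lim-inf creature forcings built from decisive creatures, where the simultaneous control of the $\forall$-type and $\exists^\infty$-type slalom statements for sufficiently different pairs is obtained via bigness, halving and rapid reading of names, and the $\aleph_2$-c.c.\ comes from a $\Delta$-system argument under CH. So the specific preservation theorem your plan leans on is not available off the shelf, and you would have to develop it from scratch.

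The paper's own proof is simply an invocation of \cite[Theorem 2]{forcing}: the pairs $(f_1,g_1)$, $(f_2,g_2)$ were fixed in Notation \ref{notation_solution} precisely so as to be ``sufficiently different'' in the sense required there, and that theorem directly yields a proper, $\aleph_2$-c.c., cardinal-preserving forcing in whose extension the two slalom statements $(\star)_1$ and $(\star)_2$ hold with $\lambda=\aleph_1$. Your surrounding bookkeeping (reflection of an $\aleph_1$-sized $Y$ to a stage $\alpha<\omega_2$ by the $\aleph_2$-c.c., absoluteness of ``$\eta(i)\in\nu(i)$ for infinitely many $i$'', taking $\mathcal{F}=(\prod_i[G^*_i]^{k_i})^{\mathbf{V}}$ of size $\aleph_1$ under CH, and the remark that cofinitely-often capture suffices for condition (A)) is fine, but it is the easy part; without either quoting \cite[Theorem 2]{forcing} as stated or actually constructing $\mathbb{Q}$ and proving the preservation claim, the argument is incomplete at its central step.
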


	\begin{proof} This is by \cite[Theorem 2]{forcing} and the choice of $(f_1, g_1), (f_2, g_2)$ in Notation~\ref{notation_solution}.
\end{proof}

	\begin{proof}[Proof of Main Theorem] This follows from Lemmas \ref{cruc_lemma1} and \ref{cruc_lemma2}, and Theorem \ref{cons_th}.
\end{proof}

\end{document}